\newtheorem{thm}{Theorem}[section]
\newtheorem{lem}[thm]{Lemma}
 \newcommand{\thmref}[1]{Theorem~\ref{#1}}
 \newcommand{\lemref}[1]{Lemma~\ref{#1}}
\newcommand{\R}{{\mathbb R}}
\newcommand{\N}{{\mathbb N}}
\newcommand{\dl}{{\delta}}
\newcommand{\bee}{\begin{equation*}}
\newcommand{\eee}{\end{equation*}}
\newcommand{\be}{\begin{equation}}
\newcommand{\ee}{\end{equation}}
\newcommand{\pn}{\par\noindent}
\title{
Dynamical Systems Method for Solving Ill-conditioned Linear
Algebraic Systems }
\author{Sapto W. Indratno\\
\small Department of Mathematics\\[-0.8ex]
\small Kansas State University, Manhattan, KS 66506-2602, USA\\
\small \texttt{sapto@math.ksu.edu}\\
\and
A G Ramm\\
\small Department of Mathematics\\[-0.8ex]
\small Kansas State University, Manhattan, KS 66506-2602, USA\\[-0.8ex]
\small \texttt{ramm@math.ksu.edu}\\
}
\date{}
\begin{document}

\maketitle

\begin{abstract}
A new method, the Dynamical Systems Method (DSM), justified
recently, is applied to solving ill-conditioned linear algebraic
system (ICLAS). The DSM gives a new approach to solving a wide class
of ill-posed problems. In this paper a new iterative scheme for
solving ICLAS is proposed. This iterative scheme is based on the DSM
solution. An a posteriori stopping rules for the proposed method is
justified. This paper also gives an a posteriori stopping rule for a
modified iterative scheme developed in A.G.Ramm, JMAA,330
(2007),1338-1346, and proves
convergence of the solution obtained by the iterative scheme.
\end{abstract}
\pn{\\ {\em MSC:} 15A12; 47A52; 65F05; 65F22  \\

\noindent\textbf{Keywords:} Hilbert matrix, Fredholm integral
equations of the first kind, iterative regularization, variational
regularization,
discrepancy principle, Dynamical Systems Method }\\

\noindent\textbf{Biographical notes:} Professor Alexander G. Ramm is
an author of more than 580 papers, 2 patents, 12 monographs, an
editor of 3 books, and an associate editor of several mathematics
and computational mathematics Journals. He gave more than 135
addresses at various Conferences, visited many Universities in
Europe, Africa, America, Asia, and Australia. He won Khwarizmi Award
in Mathematics, was Mercator Professor, Distinguished Visiting
Professor supported by the Royal Academy of Engineering, invited
plenary speaker at the Seventh PanAfrican Congress of
Mathematicians, a London Mathematical Society speaker, distinguished
HKSTAM speaker, CNRS research professor, Fulbright professor in
Israel, distinguished Foreign professor in Mexico and Egypt. His
research interests include inverse and ill-posed problems,
scattering theory, wave propagation, mathematical physics,
differential and integral equations, functional analysis, nonlinear
analysis, theoretical numerical analysis, signal processing, applied
mathematics and operator theory.\\

\noindent Sapto W. Indratno is currently a PhD student at Kansas
State University under the supervision of Prof. Alexander G. Ramm.
He is a coauthor of three accepted papers. His fields of interest
are numerical analysis, optimization, stochastic processes, inverse
and ill-posed problems, scattering theory, differential equations
and applied mathematics.

\section{Introduction}
We consider a linear equation
\begin{equation}\label{11}
Au=f,
\end{equation}
where $A:\R^m\to \R^m,$ and assume that equation (\ref{11}) has a
solution, possibly non-unique. According to Hadamard
\cite[p.9]{RAMM07}, problem (\ref{11}) is called well-posed if the
operator $A$ is injective, surjective, and $A^{-1}$ is continuous.
Problem (\ref{11}) is called ill-posed if it is not well-posed.
Ill-conditioned
linear algebraic systems arise as discretizations of
ill-posed problems, such as Fredholm integral equations of the first
kind, \be\label{Fred1} \int_a^b k(x,t)u(t)dt=f(x),\ c\leq x\leq d,
\ee where $k(x,t)$ is a
smooth kernel. Therefore, it is of interest to develop a method for solving ill-conditioned linear algebraic systems stably.
  In this paper we give a method for solving
linear algebraic systems \eqref{11} with an ill-conditioned-matrix
$A$. The matrix $A$ is called ill-conditioned if $\kappa(A)>>1,$
where $\kappa(A):=||A|||A^{-1}||$ is the condition number of $A$. If
the null-space of $A$, $ \mathcal{N}(A):=\{u: Au=0\}$, is
non-trivial, then $\kappa(A)=\infty$. Let $A=U\Sigma V^*$ be the
singular value decomposition (SVD) of $A$, $UU^*=U^*U=I$,
$VV^*=V^*V=I$, and
$\Sigma=\text{diag}(\sigma_1,\sigma_2,\hdots,\sigma_m)$, where
$\sigma_1\geq \sigma_2\geq \dots\geq \sigma_m\geq 0$ are the singular
values of $A.$ Applying this SVD to the matrix $A$ in (\ref{11}),
one gets \be\label{LSP}f=\sum_i\beta_i u_i\text{ and
}y=\sum_{i,\sigma_i>0}
\frac{\beta_i}{\sigma_i}v_i,\ee where $\beta_i=\langle
u_i,f\rangle$. Here $\langle \cdot,\cdot\rangle$ denotes the inner
product of two vectors. The terms with small singular values
$\sigma_i$ in (\ref{LSP}) cause instability of the solution, because
the coefficients $\beta_i$ are known with errors. This difficulty is
essential  when one deals with an ill-conditioned matrix $A$.
Therefore a regularization is needed for solving ill-conditioned
linear algebraic system (\ref{11}). There are many methods to solve
\eqref{11} stably: variational regularization, quasisolutions,
iterative regularization (see e.g, \cite{NHRAMM08}, \cite{MRZ84},
\cite{RAMM05}, \cite{RAMM07}). The method proposed in this paper is
based on the Dynamical Systems Method (DSM) developed in
\cite[p.76]{RAMM07}. The DSM for solving equation (\ref{11})
 with, possibly, nonlinear operator $A$ consists of solving the Cauchy
problem
\begin{equation}\label{dsm1}
\dot{u}(t)=\Phi(t,u(t)),\quad  u(0)=u_0;\
\dot{u}(t):=\frac{du}{dt},
\end{equation} where $u_0\in H$ is an arbitrary element of a Hilbert space $H$, and $\Phi$ is some nonlinearity, chosen so that the following three
conditions hold: a) there exists a unique solution
$u(t)\quad\forall t\geq 0,$  b) there exists $u(\infty)$, and c)
$Au(\infty)=f.$

In this paper we choose $\Phi(t,u(t))=(A^*A+a(t)I)^{-1}f-u(t)$ and
consider the following Cauchy problem:  \be\label{DSM3}
\dot{u}_a(t)=-u_a(t)+\left[A^*A+a(t)I_m\right]^{-1}A^*f,\quad
u_a(0)=u_0, \ee where \be\label{cat} a(t)>0,\text{ and }a(t)\searrow
0\text{ as }t\to \infty ,\ee $A^*$ is the adjoint matrix and $I_n$
is an $m\times m$ identity matrix. The initial element $u_0$ in
(\ref{DSM3}) can be chosen arbitrarily in $N(A)^\perp$, where
\be\label{NA} \mathcal{N}(A):=\{u\ | \ Au=0\}.\ee For example, one
may take $u_0=0$ in \eqref{DSM3} and then the unique solution to
(\ref{DSM3}) with $u(0)=0$ has the form \be\label{DSMsol}
u(t)=\int_0^{t}e^{-(t-s)}T_{a(s)}^{-1} A^*f ds,\ee where $T:=A^*A$,
$T_a:=T+aI$, $I$ is the identity operator. In the case of noisy data
we replace the exact data $f$ with the noisy data $f_\dl$ in
\eqref{DSMsol}, i.e., \be\label{DSMsol2}
u^\dl(t)=\int_0^{t_\dl}e^{-(t_\dl-s)}T_{a(s)}^{-1} A^*f_\dl ds,\ee
where $t_\delta$ is the stopping time which will be discussed later.
There are many ways to solve the Cauchy problem (\ref{DSM3}). For
example, one may apply a family of Runge-Kutta methods for solving
(\ref{DSM3}). Numerically, the Runge-Kutta methods require an
appropriate stepsize to get an accurate and stable solution. Usually
the stepsizes have to be chosen sufficiently small to get such a
solution. The number of steps will increase when $t_\delta$, the
stopping time, increases, see \cite{NHRAMM08}. Therefore the
computation time will increase significantly. Since $\lim_{\delta\to
0}t_\delta = \infty$, as was proved in \cite{RAMM07}, the family of
the Runge-Kutta method may be less efficient for solving the Cauchy
problem (\ref{DSM3}) than the method, proposed in this paper. We
give a simple iterative scheme, based on the DSM, which produces
stable solution to equation \eqref{11}. The novel points in our
paper are iterative schemes \eqref{itnf1} and \eqref{itn1} (see
below), which are constructed on the basis of formulas
\eqref{DSMsol} and \eqref{DSMsol2}, and a modification of the
iterative scheme given in \cite{RAMM504}. Our stopping rule for the
iterative scheme \eqref{itn1} is given in \eqref{srule} (see below).
In \cite[p.76]{RAMM07} the function $a(t)$ is assumed to be a slowly
decaying monotone function. In this paper instead of using the
slowly decaying continuous function $a(t)$ we use the following
piecewise-constant function: \be\label{ae}
a^{(n)}(t)=\sum_{j=0}^{n-1}
\alpha_0q^{j+1}\chi_{(t_j,t_{j+1}]}(t),\quad q\in(0,1),\quad
t_j=-j\ln (q),\ n\in\N, \ee where $\N$ is the set of positive
integer, $t_0=0$, $\alpha_0>0$, and
\be\label{chi1}\chi_{(t_j,t_{j+1}]}(t)=\left\{
                         \begin{array}{ll}
                           1, & \hbox{$t\in (t_j,t_{j+1}]$;} \\
                           0, & \hbox{otherwise.}
                         \end{array}
                       \right.\ee The parameter $\alpha_0$ in \eqref{ae} is chosen so that assumption
\eqref{Asfd} (see below) holds.
This assumption plays an important role in the
proposed iterative scheme. Definition \eqref{ae} allows one to write
\eqref{DSMsol} in the form \be\begin{split}\label{itnf1}
u_{n+1}&=qu_{n}+(1-q)T_{\alpha_0q^{n+1}}^{-1}A^*f,\quad u_0=0.
\end{split}\ee A detailed derivation of the iterative scheme
\eqref{itnf1} is given in Section 2. When the data $f$ are
contaminated by some noise, we use $f_\dl$ in place of $f$ in
\eqref{DSMsol}, and get the iterative scheme
\be\label{itn1}\begin{split}
u_{n+1}^\dl&=qu_n^\dl+(1-q)T_{\alpha_0q^{n+1}}^{-1}A^*f_\dl,\quad
u_0^\dl=0.
\end{split}\ee We always assume that\be\label{fdf}
\|f_\dl-f\|\leq \dl, \ee where $f_\dl$ are the noisy data, which are
known, while $f$ is unknown, and $\dl$ is the level of noise. Here
and throughout this paper the notation $\|z\|$ denotes the
$l^2$-norm of the vector $z\in \R^m$. In this paper a discrepancy
type principle (DP) is proposed to choose the stopping index of
iteration \eqref{itn1}. This DP is based on discrepancy principle
for the DSM developed in \cite{RAMM525}, where the stopping time
$t_\dl$ is obtained by solving the following nonlinear equation
\be\label{MDP}
\int_0^{t_\dl}e^{-(t_\dl-s)}a(s)\|Q_{a(s)}^{-1}f_\dl\|ds=C\dl,\quad
C\in(1,2]. \ee It is a non-trivial task to obtain the stopping time
$t_\dl$ satisfying \eqref{MDP}. In this paper we propose a
discrepancy type principle based on \eqref{MDP} which can be easily
implemented numerically: iterative scheme \eqref{itn1} is stopped at
the first integer $n_\dl$ satisfying the inequalities:
\be\label{rule}\begin{split}
&\sum_{j=0}^{n_\dl-1}(q^{n_\dl-j-1}-q^{n-j})\alpha_0q^{j+1}\|Q_{\alpha_0q^{j+1}}^{-1}f_\dl\|
ds\leq C\dl^\varepsilon\\
&<
\sum_{j=0}^{n-1}(q^{n-j-1}-q^{n-j})\alpha_0q^{j+1}\|Q_{\alpha_0q^{j+1}}^{-1}f_\dl\|
,\ 1\leq n<n_\dl, \end{split}\ee and it is assumed that
\be\label{Asfd} (1-q)\alpha_0q\|Q_{\alpha_0q}^{-1}f_\dl\|\geq
C\dl^\varepsilon,\quad C>1,\quad\varepsilon\in(0,1),\quad
\alpha_0>0.\ee We prove in Section 2 that using discrepancy-type
principle \eqref{rule}, one gets the convergence: \be \lim_{\dl\to
0}\|u_{n_\dl}^\dl-y\|=0, \ee where $u_n^\dl$ is defined in
\eqref{itn1}. About other versions of discrepancy principles for DSM
we refer the reader to \cite{RAMM05},\cite{RAMM480}. In this paper
we assume that $A$ is bounded. If the operator $A$ is unbounded then
$f_\dl$ may not belong to the domain of $A^*$. In this case the
expression $A^*f_\dl$ is not defined. In \cite{RAMM500},
\cite{RAMM504} and \cite{RAMM522} solving \eqref{11} with unbounded
operators is discussed. In these papers the unbounded operator $A$
is assumed to be linear, closed, densely defined operator in a
Hilbert space. Under these assumptions one may use the operator
$A^*(AA^*+aI)^{-1}$ in place of $T_a^{-1}A^*$. This operator is
defined for any $f$ in the Hilbert space. \\ In \cite{RAMM504} an
iterative scheme with a constant regularization parameter is given:
\be\label{itr1}\ u_{n+1}^\dl=aT_a^{-1}u_n^\dl+T_a^{-1}A^*f_\dl, \ee
but the stopping rule, which produces a stable solution of equation
\eqref{11} by this iterative scheme, has not been discussed in
\cite{RAMM504}. In this paper the constant regularization parameter
$a$ in iterative scheme \eqref{itr1} is replaced with the geometric
series $\{\alpha_0q^n\}_{n=1}^\infty,$ $\alpha_0>0,\ q\in(0,1)$,
i.e. \be\label{itr2}
u_{n+1}^\dl=\alpha_0q^nT_{\alpha_0q^n}^{-1}u_n^\dl+T_{\alpha_0q^n}^{-1}A^*f_\dl.
\ee Stopping rule \eqref{srule} (see below) is used for this
iterative scheme. Without loss of generality we use $\alpha_0=1$ in
\eqref{itr2}. The convergence analysis of this iterative scheme is
presented in Section 3. In Section 4 some numerical experiments are
given to illustrate the efficiency of the proposed methods.
\section{Derivation of the proposed method }
In this section we give a detailed derivation of iterative schemes
\eqref{itnf1} and \eqref{itn1}. Let us denote by $y\in\R^m$ the
unique minimal-norm solution of equation (\ref{11}). Throughout this
paper we denote $T_{a(t)}:=A^*A+a(t)I_m$, where $I_m$ is the
identity operator in $\R^m$, and $a(t)$ is given in \eqref{ae}.
\begin{lem}\label{lemq} Let $g(x)$ be a continuous function on
$(0,\infty)$, $c>0$ and $q\in(0,1)$ be constants. If \be\label{g}
\lim_{x\to 0^+}g(x)=g(0):=g_0,\ee then \be \lim_{n\to
\infty}\sum_{j=1}^{n-1}\left(q^{n-j-1}-q^{n-j}\right)g(cq^{j+1})=
g_0. \ee
\end{lem}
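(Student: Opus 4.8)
The plan is to recognize the sum as a weighted average of values $g(cq^{j+1})$ and to exploit two facts: the weights are nonnegative and sum (in the limit) to $1$, and the arguments $cq^{j+1}$ cluster near $0$ for large $j$ while $g$ is continuous there. First I would compute the total weight. Since $q^{n-j-1}-q^{n-j} = q^{n-j-1}(1-q)$, the sum of the weights over $j=1,\dots,n-1$ is $(1-q)\sum_{j=1}^{n-1}q^{n-j-1} = (1-q)\sum_{k=0}^{n-2}q^{k} = (1-q)\cdot\frac{1-q^{n-1}}{1-q} = 1-q^{n-1}$, which tends to $1$ as $n\to\infty$. So it suffices to show $\sum_{j=1}^{n-1}(q^{n-j-1}-q^{n-j})\bigl(g(cq^{j+1})-g_0\bigr)\to 0$, since adding back $g_0(1-q^{n-1})\to g_0$ finishes the claim.

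To estimate that error sum, fix $\varepsilon>0$. By \eqref{g} choose $J$ so that $|g(cq^{j+1})-g_0|<\varepsilon$ for all $j\ge J$ (this uses $cq^{j+1}\to 0^+$ and continuity of $g$ at $0$ from the right). Split the sum at $j=J$. For the tail $j\ge J$: the weights are nonnegative and sum to at most $1-q^{n-1}\le 1$, so that part is bounded by $\varepsilon$. For the head $1\le j<J$: there are finitely many terms, each value $|g(cq^{j+1})-g_0|$ is some fixed finite constant (here I need $g$ to be defined and finite at these finitely many points $cq^{j+1}\in(0,\infty)$, which holds by hypothesis), and the corresponding weight $q^{n-j-1}-q^{n-j}=q^{n-j-1}(1-q)\to 0$ as $n\to\infty$ for each fixed $j<J$. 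Hence the head tends to $0$, and so $\limsup_{n\to\infty}$ of the absolute error sum is at most $\varepsilon$. Since $\varepsilon>0$ is arbitrary, the error sum tends to $0$, completing the proof.

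The only mildly delicate point is the bookkeeping in the head/tail split: one must choose $J$ first (depending only on $\varepsilon$ and $g$), and only afterwards let $n\to\infty$, so that the head has a fixed number of terms whose weights individually vanish. There is no real analytic obstacle here — the continuity of $g$ at $0^+$ together with $\sup$-control of the nonnegative weights does all the work; the statement is essentially an Abel/Toeplitz-type summability fact, and I would present it in exactly that elementary $\varepsilon$-$J$-$n$ order.
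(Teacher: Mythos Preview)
Your proof is correct and follows essentially the same approach as the paper's: both arguments perform a head/tail split, using continuity of $g$ at $0^+$ to control the tail (large $j$) and the decay $q^{n-j-1}(1-q)\to 0$ to kill the finitely many head terms. Your presentation is slightly cleaner in that you subtract $g_0$ at the outset and reduce to showing the centered sum vanishes, whereas the paper compares the full tail directly to $g_0$ via $\sum_{j=l}^{n}\omega_j(n)=1-q^{n+1-l}$; but the underlying Toeplitz-type idea and the order of quantifiers (fix the cutoff $J$ first, then let $n\to\infty$) are identical.
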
 \begin{proof}
Let \be\label{omg}\omega_j(n):=q^{n-j}-q^{n+1-j},\quad
\omega_j(n)>0,\ee and
\be\label{Fn}F_l(n):=\sum_{j=1}^{l-1}\omega_{j}(n)g(cq^{j}).\ee
Then \bee |F_{n+1}(n)-g_0|\leq |F_{l}(n)|+\left|\sum_{j=l}^n
\omega_j(n)g(cq^{j})-g_0\right|.\eee Take $\epsilon>0$ arbitrary
small. For sufficiently large $l(\epsilon)$ one can choose
$n(\epsilon)$, such that \bee |F_{l(\epsilon)}(n)|\leq
\frac{\epsilon}{2},\ \forall n>n(\epsilon), \eee because
$\lim_{n\to\infty}q^n=0.$ Fix $l=l(\epsilon)$ such that
$|g(cq^j)-g_0|\leq \frac{\epsilon}{2}$ for $j>l(\epsilon)$. This is
possible because of \eqref{g}. One has \bee |F_{l(\epsilon)}(n)|\leq
\frac{\epsilon}{2},\ n>n(\epsilon) \eee and \bee\begin{split}
\left|\sum_{j=l(\epsilon)}^n \omega_j(n)g(cq^{j})-g_0\right|&\leq
\sum_{j=l(\epsilon)}^{n}
\omega_j(n)|g(cq^{j})-g_0|+|\sum_{j=l(\epsilon)}^{n}
\omega_j(n)-1||g_0|\\
&\leq
\frac{\epsilon}{2}\sum_{j=l(\epsilon)}^n\omega_j(n)+q^{n-l(\epsilon)}|g_0|\\
&\leq \frac{\epsilon}{2}+|g_0|q^{n-l(\epsilon)}\leq
\epsilon,\end{split}\eee if $n$ is sufficiently large. Here we have
used the relation\bee \sum_{j=l}^{n}\omega_j(n)=1-q^{n+1-l}. \eee
Since $\epsilon>0$ is arbitrarily small, \lemref{lemq} is proved.

\end{proof}

Let us define \be\label{ME}
u_n:=\int_0^{t_n}e^{-(t_n-s)}T_{a^{(n)}(s)}^{-1}A^*fds,\quad
t_n=-n\ln(q), \quad q\in(0,1).\ee Note that \bee\begin{split}
u_n&=\int_0^{t_{n-1}}e^{-(t_n-s)}T_{a^{(n)}(s)}^{-1}A^*fds+\int_{t_{n-1}}^{t_{n}}e^{-(t_n-s)}T_{a^{(n)}(s)}^{-1}A^*fds\\
&=e^{-(t_n-t_{n-1})}\int_0^{t_{n-1}}e^{-(t_{n-1}-s)}T_{a(s)}^{-1}A^*fds+\int_{t_{n-1}}^{t_{n}}e^{-(t_n-s)}T_{a^{(n)}(s)}^{-1}A^*fds\\
&=e^{-(t_n-t_{n-1})}u_{n-1}+\int_{t_{n-1}}^{t_{n}}e^{-(t_n-s)}T_{a^{(n)}(s)}^{-1}A^*fds.
\end{split}\eee
Using definition \eqref{ae}, one gets \bee\begin{split}
u_n&=e^{-(t_n-t_{n-1})}u_{n-1}+[1-e^{-(t_n-t_{n-1})}]T_{\alpha_0q^{n}}^{-1}A^*f\\
&=\frac{q^{n}}{q^{n-1}}u_{n-1}+(1-\frac{q^{n}}{q^{n-1}})T_{\alpha_0q^{n}}^{-1}A^*f.
\end{split}\eee Therefore, \eqref{ME} can be rewritten as iterative scheme
\eqref{itnf1}.
\begin{lem}\label{lemex}
Let $u_n$ be defined in \eqref{itnf1} and $Ay=f$. Then \be\label{UB}
\|u_n-y\|\leq q^n\|y\|+\sum_{j=0}^{n-1}\left(
q^{n-j-1}-q^{n-j}\right)\alpha_0q^{j+1}\|T_{\alpha_0q^{j+1}}^{-1}y\|,\quad
\forall n\geq 1,\ee and \be\label{conu} \|u_n-y\|\to 0 \text{ as }
n\to \infty. \ee
\end{lem}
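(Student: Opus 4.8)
The plan is to reduce the iterative scheme \eqref{itnf1} to a closed-form expression for the error $u_n-y$ and then estimate it term by term; the convergence statement will then follow by passing to the limit in that estimate with the help of \lemref{lemq}.

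First I would use the hypothesis $Ay=f$ to rewrite $A^{*}f=A^{*}Ay=Ty$, and then exploit the elementary identity
\[
T_{a}^{-1}Ty=T_{a}^{-1}\bigl(T_{a}y-ay\bigr)=y-aT_{a}^{-1}y,\qquad a>0 .
\]
Substituting $a=\alpha_{0}q^{n+1}$ into \eqref{itnf1} turns the recursion into
\[
u_{n+1}-y=q\,(u_{n}-y)-(1-q)\alpha_{0}q^{n+1}T_{\alpha_{0}q^{n+1}}^{-1}y,\qquad u_{0}-y=-y .
\]
This is a scalar linear first-order recursion for the vector $w_{n}:=u_{n}-y$, and iterating it (writing $q^{n-k}(1-q)=q^{n-k}-q^{n-k+1}$ and shifting the summation index) gives
\[
u_{n}-y=-q^{n}y-\sum_{j=0}^{n-1}\bigl(q^{n-j-1}-q^{n-j}\bigr)\alpha_{0}q^{j+1}T_{\alpha_{0}q^{j+1}}^{-1}y .
\]
Taking the $l^{2}$-norm, using the triangle inequality, and noting that all the scalar weights $q^{n-j-1}-q^{n-j}$ are nonnegative, yields exactly \eqref{UB}.

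To obtain \eqref{conu} I would pass to the limit in \eqref{UB}. The term $q^{n}\|y\|\to 0$ since $q\in(0,1)$. For the sum I would apply \lemref{lemq} with $c=\alpha_{0}$ and $g(x):=x\|T_{x}^{-1}y\|$: this $g$ is continuous on $(0,\infty)$ because $x\mapsto T_{x}^{-1}=(A^{*}A+xI_{m})^{-1}$ is continuous there, and the sum appearing in \eqref{UB} equals $\sum_{j=0}^{n-1}\bigl(q^{n-j-1}-q^{n-j}\bigr)g(\alpha_{0}q^{j+1})$, which differs from the sum in \lemref{lemq} only by the single term $(q^{n-1}-q^{n})g(\alpha_{0}q)$, tending to $0$ as $n\to\infty$. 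Hence, once $g_{0}:=\lim_{x\to 0^{+}}x\|T_{x}^{-1}y\|$ is shown to be $0$, \lemref{lemq} gives that the sum tends to $0$, and \eqref{conu} follows.

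The point requiring care — really the only substantive step — is the verification that $\lim_{x\to 0^{+}}x\|T_{x}^{-1}y\|=0$, which is also what makes the extension $g(0):=0$ continuous as \lemref{lemq} requires. Here I would use that $y$ is the \emph{minimal-norm} solution of \eqref{11}, so $y\in\mathcal{N}(A)^{\perp}=\mathcal{N}(A^{*}A)^{\perp}$. Expanding $y=\sum_{i}\gamma_{i}v_{i}$ in an orthonormal eigenbasis of $T=A^{*}A$ with $Tv_{i}=\sigma_{i}^{2}v_{i}$, the coefficients $\gamma_{i}$ with $\sigma_{i}=0$ vanish, whence
\[
x^{2}\|T_{x}^{-1}y\|^{2}=\sum_{\sigma_{i}>0}\frac{x^{2}\gamma_{i}^{2}}{(\sigma_{i}^{2}+x)^{2}} .
\]
Since this is a finite sum (we are in $\R^{m}$) and each summand tends to $0$ as $x\to 0^{+}$, the claim follows. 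This closes the argument.
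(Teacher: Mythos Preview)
Your proof is correct and follows essentially the same route as the paper: derive the closed-form expression $u_n-y=-q^{n}y-\sum_{j=0}^{n-1}(q^{n-j-1}-q^{n-j})\alpha_0q^{j+1}T_{\alpha_0q^{j+1}}^{-1}y$ from the recursion and the identity $T_a^{-1}A^*f=y-aT_a^{-1}y$, take norms, and then invoke \lemref{lemq} with $g(a)=a\|T_a^{-1}y\|$ together with $y\perp\mathcal{N}(A)$ to conclude $g_0=0$. The only cosmetic differences are that the paper substitutes directly into the summation form \eqref{dun} rather than iterating the error recursion, and it phrases the limit $g(a)\to 0$ via the resolution of the identity rather than a finite eigenbasis; your explicit handling of the extra $j=0$ term when matching the index range of \lemref{lemq} is a detail the paper glosses over.
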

\begin{proof}
By definitions \eqref{ME} and \eqref{ae} we obtain
\be\begin{split}\label{dun}
u_n=\int_0^{t_n}e^{-(t_n-s)}T_{a(s)}^{-1}A^*fds=\sum_{j=0}^{n-1}\left(\frac{q^{n}}{q^{j+1}}-\frac{q^{n}}{q^j}\right)T_{\alpha_0q^{j+1}}^{-1}A^*f.
\end{split}\ee From \eqref{dun} and the equation $Ay=f$, one gets: \bee\begin{split}
u_{n}&=\sum_{j=0}^{n-1}\left(\frac{q^{n}}{q^{j+1}}-\frac{q^{n}}{q^j}\right)T_{\alpha_0q^{j+1}}^{-1}A^*f\\
&=\sum_{j=0}^{n-1}\left(\frac{q^{n}}{q^{j+1}}-\frac{q^{n}}{q^j}\right)T_{\alpha_0q^{j+1}}^{-1}(T_{\alpha_0q^{j+1}}-\alpha_0q^{j+1}I_m)y\\
&=\sum_{j=0}^{n-1}\left(q^{n-j-1}-q^{n-j}\right)y-\sum_{j=0}^{n-1}\left(q^{n-j-1}-q^{n-j}\right)\alpha_0q^{j+1}T_{\alpha_0q^{j+1}}^{-1}y\\
&=y-q^ny-\sum_{j=0}^{n-1}\left(q^{n-j-1}-q^{n-j}\right)\alpha_0q^{j+1}T_{\alpha_0q^{j+1}}^{-1}y.
\end{split}\eee Thus, estimate \eqref{UB} follows.
To prove \eqref{conu}, we apply \lemref{lemq} with
$g(a):=a\|T_a^{-1}y\|.$ Since $y\perp \mathcal{N}(A)$, it follows
from the spectral theorem that $$\lim_{a\to 0}g^2(a)=\lim_{a\to 0}
\int_0^\infty\frac{a^2}{(a+s)^2}d\langle
E_sy,y\rangle=\|P_{\mathcal{N}(A)}y\|^2=0,$$ where $E_s$ is the
resolution of the identity corresponding to $A^*A$, and $P$ is the
orthogonal projector onto $\mathcal{N}(A)$. Thus, by \lemref{lemq},
\eqref{conu} follows.
\end{proof}

Let us discuss iterative scheme \eqref{itn1}. The following lemma
gives the estimate of the difference of the solutions $u_n^\dl$ and
$u_n$.
\begin{lem}\label{lemudu} Let $u_n$ and $u_n^\dl$ be defined in
\eqref{itnf1} and \eqref{itn1}, respectively. Then \be\label{udu}
\|u_{n}^\dl-u_n\|\leq \frac{\sqrt{q}}{1-q^{3/2}}w_n,\quad n\geq
0,\ee where
$w_n:=(1-q)\frac{\delta}{2\sqrt{q}\sqrt{\alpha_0q^{n}}}.$
\end{lem}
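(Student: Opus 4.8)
The plan is to write both $u_n$ and $u_n^\dl$ in closed form using the explicit expansion \eqref{dun}, subtract, and estimate term by term. From \eqref{dun} and the analogous formula for the noisy iteration we have
\be\begin{split}
u_n^\dl - u_n = \sum_{j=0}^{n-1}\left(q^{n-j-1}-q^{n-j}\right) T_{\alpha_0 q^{j+1}}^{-1} A^*(f_\dl - f).
\end{split}\ee
So the first step is purely algebraic: reduce the difference to a weighted sum of vectors of the form $T_{\alpha_0 q^{j+1}}^{-1} A^* e$, where $e := f_\dl - f$ satisfies $\|e\|\leq\dl$ by \eqref{fdf}.

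The second step is the scalar estimate $\|T_a^{-1} A^* e\| \leq \frac{\|e\|}{2\sqrt{a}}$ for $a>0$. This follows from the spectral theorem applied to $A^*A$: writing $T_a^{-1}A^* = (A^*A+aI)^{-1}A^*$ and using the SVD, the multiplier is $\sigma/(\sigma^2+a)$, whose maximum over $\sigma\geq 0$ is $\frac{1}{2\sqrt a}$ (attained at $\sigma=\sqrt a$). Hence each summand is bounded by $\left(q^{n-j-1}-q^{n-j}\right)\frac{\dl}{2\sqrt{\alpha_0 q^{j+1}}}$.

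The third step is to sum the resulting geometric-type series and match it to the stated bound. We get
\be\begin{split}
\|u_n^\dl - u_n\| &\leq \sum_{j=0}^{n-1}\left(q^{n-j-1}-q^{n-j}\right)\frac{\dl}{2\sqrt{\alpha_0 q^{j+1}}} = \frac{(1-q)\dl}{2\sqrt{\alpha_0}}\sum_{j=0}^{n-1} q^{n-j-1} q^{-(j+1)/2}.
\end{split}\ee
Writing $q^{n-j-1} = q^n q^{-(j+1)}$ and pulling out $q^{-n/2}$ to produce the factor $w_n = (1-q)\frac{\dl}{2\sqrt{q}\sqrt{\alpha_0 q^n}}$, the remaining sum is $\sum_{j=0}^{n-1}(q^{3/2})^{\,n-1-j}\cdot(\text{constant})$, a finite geometric sum bounded by $\frac{1}{1-q^{3/2}}$; collecting the leftover powers of $q$ gives the factor $\frac{\sqrt q}{1-q^{3/2}}$. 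The only mild obstacle is bookkeeping the powers of $q$ carefully so that the prefactor comes out exactly as $\frac{\sqrt q}{1-q^{3/2}}w_n$ rather than something off by a bounded factor; everything else is the routine scalar bound and a geometric-series sum. I would double-check the index arithmetic by evaluating the $n=1$ case directly against \eqref{itnf1} and \eqref{itn1}.
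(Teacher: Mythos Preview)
Your argument is correct and reaches the stated bound, but it proceeds differently from the paper. The paper works recursively: it sets $H_n:=\|u_n^\dl-u_n\|$, derives the one-step inequality $H_{n+1}\le qH_n+w_n$ directly from the iterations \eqref{itnf1}--\eqref{itn1} and the operator bound $\|T_a^{-1}A^*\|\le\frac{1}{2\sqrt a}$, and then verifies \eqref{udu} by a short induction on $n$. You instead unroll both iterations into the closed form \eqref{dun}, subtract, bound term by term, and sum the resulting geometric series. Both routes hinge on the same scalar estimate; the difference is purely organizational. The paper's induction sidesteps the index bookkeeping you flag as the ``mild obstacle,'' while your direct summation actually yields the slightly sharper intermediate quantity $\frac{(1-q)\dl}{2\sqrt{\alpha_0}}\cdot\frac{q^{-n/2}-q^{n}}{1-q^{3/2}}$ before you drop the $-q^n$ term. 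Either way the final constant $\frac{\sqrt q}{1-q^{3/2}}$ comes out the same, and your suggested $n=1$ sanity check is exactly how the paper initializes its induction.
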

\begin{proof} Let $H_n:=\|u_{n}^\dl-u_{n}\|$. Then from the definitions of $u_n^\dl$ and $u_n$ we get the estimate \be H_{n+1}\leq
q\|u_n^\dl-u_n\|+(1-q)\|T_{\alpha_0q^{n+1}}^{-1}A^*(f_\dl-f)\|\leq
qH_n+w_n. \ee Let us prove inequality \eqref{udu} by induction. For
$n=0$ one has $u_0=u_0^\dl=0$, so \eqref{udu} holds. For $n=1$ one
has $\|u_1^\dl-u_1\|\leq (1-q)\frac{\dl}{2\sqrt{\alpha_0q^2}}$, so
\eqref{udu} holds. If \eqref{udu} holds for $n\leq k$, then for
$n=k+1$ one has \be
\begin{split} H_{k+1}&\leq qH_k+w_k\leq
\left(\frac{q^{3/2}}{1-q^{3/2}}+1\right)w_k=\frac{1}{1-q^{3/2}}w_k\\
&=\frac{1}{1-q^{3/2}}\frac{w_k}{w_{k+1}}w_{k+1}\leq
\frac{1}{1-q^{3/2}}\sqrt{q}w_{k+1}.\end{split}\ee Hence \eqref{udu}
is proved for $n\geq 0$.
\end{proof}

\subsection{Stopping criterion } In this section we give a stopping
rule for iterative scheme given in \eqref{itn1}. Let $Q:=AA^*$,
$Q_{a}:=Q+aI_m,$ and\be\label{GN}\begin{split}
G_n&:=\int_0^{t_n}e^{-(t_n-s)}a(s)\|Q_{a(s)}^{-1}f_\dl\|ds\\
&=\sum_{j=0}^{n-1}(q^{n-j-1}-q^{n-j})\alpha_0q^{j+1}\|Q_{\alpha_0q^{j+1}}^{-1}f_\dl\|,\quad
n\geq 1, \end{split}\ee where $t_n=-n\ln q,$ $q\in(0,1)$ and
$\alpha_0>0.$ Then stopping rule \eqref{rule} can be rewritten as
\be\label{rule1} G_{n_\dl}\leq C\dl^\varepsilon<G_{n},\quad 1\leq
n<n_\dl,\quad \varepsilon\in(0,1),\quad C>1,\quad
G_1>C\dl^\varepsilon. \ee Note that \bee\begin{split}
G_{n+1}&=\sum_{j=0}^{n}(q^{n-j}-q^{n+1-j})\alpha_0q^{j+1}\|Q_{\alpha_0q^{j+1}}^{-1}f_\dl\|\\
&=\sum_{j=0}^{n-1}(q^{n-j}-q^{n+1-j})\alpha_0q^{j+1}\|Q_{\alpha_0q^{j+1}}^{-1}f_\dl\|+(1-q)\alpha_0q^{n+1}\|Q_{\alpha_0q^{n+1}}^{-1}f_\dl\|\\
&=qG_{n}+(1-q)\alpha_0q^{n+1}\|Q_{\alpha_0q^{n+1}}^{-1}f_\dl\|,
\end{split}\eee so
\be\label{Gn}
G_n=qG_{n-1}+(1-q)\alpha_0q^{n}\|Q_{\alpha_0q^{n}}^{-1}f_\dl\|,\quad
n\geq 1,\quad G_0=0. \ee

\begin{lem}\label{lemda}
Let $G_n$ be defined in \eqref{Gn}. Then \be\label{Gn2d} G_n\leq
\frac{1}{1-\sqrt{q}}\sqrt{\alpha_0q^n}\frac{\|y\|}{2}+\dl ,\quad
n\geq 1,\quad q\in(0,1).\ee
\end{lem}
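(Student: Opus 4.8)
The plan is to reduce everything to one elementary estimate on $a\|Q_a^{-1}f_\dl\|$ and then sum a geometric series. For any $a>0$ one has $\|Q_a^{-1}\|\le 1/a$; hence, writing $f_\dl=f+(f_\dl-f)$, using $f=Ay$ and \eqref{fdf}, I would first obtain
\be
a\|Q_a^{-1}f_\dl\|\le a\|Q_a^{-1}Ay\|+a\|Q_a^{-1}(f_\dl-f)\|\le a\|Q_a^{-1}Ay\|+\dl .
\ee
To bound the first term I would use the identity $Q_a^{-1}A=AT_a^{-1}$, which follows from $(AA^*+aI_m)A=A(A^*A+aI_m)$, and then the spectral representation of $T=A^*A$: with $E_s$ its resolution of the identity,
\be
a^2\|Q_a^{-1}Ay\|^2=a^2\|AT_a^{-1}y\|^2=\int_0^\infty\frac{a^2 s}{(a+s)^2}\,d\langle E_s y,y\rangle\le\frac{a}{4}\|y\|^2 ,
\ee
where I used $(a+s)^2\ge 4as$, i.e. $a^2 s/(a+s)^2\le a/4$. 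Thus $a\|Q_a^{-1}f_\dl\|\le\tfrac12\sqrt a\,\|y\|+\dl$ for every $a>0$; taking $a=\alpha_0 q^{j+1}$,
\be\label{key}
\alpha_0 q^{j+1}\|Q_{\alpha_0 q^{j+1}}^{-1}f_\dl\|\le\tfrac12\sqrt{\alpha_0 q^{j+1}}\,\|y\|+\dl ,\quad j\ge 0 .
\ee

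With \eqref{key} available, the cleanest way to finish is induction on $n$ using recursion \eqref{Gn}. The case $n=1$ is immediate, since $G_1=(1-q)\alpha_0 q\|Q_{\alpha_0 q}^{-1}f_\dl\|$, and then \eqref{key}, $1-q<1$, and $\frac{1}{1-\sqrt q}>1$ give the claim. For the inductive step, assume \eqref{Gn2d} holds for $n-1$; then \eqref{Gn}, the inductive hypothesis, \eqref{key} (with $j+1=n$), and the identity $q\sqrt{\alpha_0 q^{n-1}}=\sqrt q\,\sqrt{\alpha_0 q^n}$ yield
\be
G_n\le\frac{\|y\|}{2}\sqrt{\alpha_0 q^n}\left(\frac{\sqrt q}{1-\sqrt q}+(1-q)\right)+q\dl+(1-q)\dl .
\ee
The proof then closes upon the elementary inequality $\frac{\sqrt q}{1-\sqrt q}+(1-q)\le\frac{1}{1-\sqrt q}$, which rearranges to $q\sqrt q\le q$ and so holds for $q\in(0,1)$.

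An equivalent, non-inductive route is to insert \eqref{key} directly into \eqref{GN}: the $\dl$-terms sum to $\dl\sum_{j=0}^{n-1}(q^{n-j-1}-q^{n-j})=\dl(1-q^n)\le\dl$, while the $\|y\|$-terms sum to $\frac{\|y\|}{2}\sqrt{\alpha_0}\sum_{j=0}^{n-1}(q^{n-j-1}-q^{n-j})q^{(j+1)/2}=\frac{\|y\|}{2}\sqrt{\alpha_0}\,(1+\sqrt q)q^{n/2}(1-q^{n/2})$, which is at most $\frac{\|y\|}{2}\cdot\frac{\sqrt{\alpha_0 q^n}}{1-\sqrt q}$ because $(1-q)(1-q^{n/2})<1$. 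I do not anticipate a real obstacle here: the one substantive point is the Tikhonov-type spectral bound $a\|AT_a^{-1}y\|\le\tfrac12\sqrt a\,\|y\|$, and the only mild care needed is to track constants so that the factor $\frac{1}{1-\sqrt q}$ — rather than a weaker one — comes out, which is exactly what $q\sqrt q\le q$ (equivalently $(1-q)(1-q^{n/2})<1$) provides.
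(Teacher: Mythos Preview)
Your proof is correct and follows essentially the same path as the paper's: both isolate the spectral estimate $a\|Q_a^{-1}f_\dl\|\le\tfrac12\sqrt{a}\,\|y\|+\dl$ (the paper states it as the operator bound $a\|AT_a^{-1}\|\le\sqrt{a}/2$ together with $a\|Q_a^{-1}\|\le1$) and then close by induction on the recursion \eqref{Gn}. The only cosmetic differences are that the paper first subtracts $\dl$ and drops the factor $(1-q)$ before inducting, whereas you keep both and verify $\frac{\sqrt q}{1-\sqrt q}+(1-q)\le\frac{1}{1-\sqrt q}$ at the end; your additional direct-summation route via \eqref{GN} is not in the paper but is a nice alternative that avoids induction entirely.
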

\begin{proof}
Using the identity \bee -aQ_a^{-1}=AT_a^{-1}A^*-I_m,\ a>0,\
T:=A^*A,\ T_a:=T+aI_m,\eee the estimates \bee a\|Q_a^{-1}\|\leq 1,\
\|f_\dl-f\|\leq \dl, \eee and \bee a\|AT_a^{-1}\|\leq
\frac{\sqrt{a}}{2}, \eee where $Q:=AA^*$, $Q_a:=Q+aI_m$, we get
\be\begin{split}\label{Ges1} G_{n}&=qG_{n-1}+(1-q)\|AT_{\alpha_0q^{n}}^{-1}A^*f_\dl-f_\dl\|\\
&=qG_{n-1}+(1-q)\|AA^*Q_{\alpha_0q^{n}}^{-1}f_\dl-f_\dl\|\\
&=qG_{n-1}+(1-q)\|(AA^*+\alpha_0q^{n}I-\alpha_0q^nI)Q_{q^{n}}^{-1}f_\dl-f_\dl\|\\
&=qG_{n-1}+(1-q)\alpha_0q^n\|Q_{\alpha_0q^{n}}^{-1}f_\dl\|\\
&=qG_{n-1}+(1-q)\alpha_0q^n\|Q_{\alpha_0q^{n}}^{-1}(f_\dl-f+f)\|\\
&\leq qG_{n-1}+(1-q)\alpha_0q^n\|Q_{\alpha_0q^{n}}^{-1}(f_\dl-f)\|+(1-q)\alpha_0q^n\|Q_{\alpha_0q^{n}}^{-1}f\|\\
&\leq qG_{n-1}+(1-q)\dl+(1-q)\|AT_{\alpha_0q^n}^{-1}A^*f-f\|\\
&=qG_{n-1}+(1-q)\dl+(1-q)\|A(T_{\alpha_0q^n}^{-1}A^*Ay-y)\|\\
&=qG_{n-1}+(1-q)\dl+(1-q)\|A(-\alpha_0q^nT_{\alpha_0q^n}^{-1}y)\|\\
&=qG_{n-1}+(1-q)\dl+(1-q)\alpha_0q^n\|AT_{\alpha_0q^n}^{-1}y\|\\
&\leq qG_{n-1}+(1-q)\dl+(1-q)\alpha_0q^n\frac{\|y\|}{2\sqrt{\alpha_0q^n}}\\
&=qG_{n-1}+(1-q)\dl+(1-q)\sqrt{\alpha_0q^n}\frac{\|y\|}{2}\\
&=qG_{n-1}+(1-q)\dl+
\sqrt{q}\frac{\sqrt{\alpha_0q^{n-1}}}{2}\|y\|.\end{split}\ee
Therefore,
\be\label{Gnd} G_n-\dl\leq
q(G_{n-1}-\dl)+\sqrt{q}\frac{\sqrt{\alpha_0q^{n-1}}}{2}\|y\|,\quad
n\geq 1,\ G_0=0. \ee Let us prove relation \eqref{Gn2d} by
induction. From relation \eqref{Gnd} we get \be G_1-\dl\leq
-q\dl+\frac{\sqrt{\alpha_0q}}{2}\|y\|\leq-q\dl+
\frac{1}{1-\sqrt{q}}\frac{\sqrt{\alpha_0q}}{2}\|y\| \leq
\frac{1}{1-\sqrt{q}}\frac{\sqrt{\alpha_0q}}{2}\|y\|. \ee Thus, for
$n=1$ relation \eqref{Gn2d} holds. Suppose that \be\label{Gind1}
G_{n}-\dl\leq
\frac{1}{1-\sqrt{q}}\frac{\sqrt{\alpha_0q^{n}}}{2}\|y\|,\quad 1\leq
n\leq k. \ee Then by inequalities \eqref{Gnd} and \eqref{Gind1} we
obtain \be\begin{split} G_{k+1}-\dl&\leq
q(G_k-\dl)+\sqrt{q}\frac{\sqrt{\alpha_0q^{k}}}{2}\|y\|\\
&\leq
q\frac{1}{1-\sqrt{q}}\frac{\sqrt{\alpha_0q^{k}}}{2}\|y\|+\sqrt{q}\frac{\sqrt{\alpha_0q^{k}}}{2}\|y\|\\
&=
\frac{\sqrt{q}}{1-\sqrt{q}}\frac{\sqrt{\alpha_0q^{k}}}{2}\|y\|=\frac{\sqrt{q}}{1-\sqrt{q}}\frac{\sqrt{\alpha_0q^{k}}}{2\sqrt{\alpha_0q^{k+1}}}\sqrt{\alpha_0q^{k+1}}\|y\|\\
&\leq
\frac{1}{1-\sqrt{q}}\frac{\sqrt{\alpha_0q^{k+1}}}{2}\|y\|.\end{split}\ee
Thus, relation \eqref{Gn2d} is proved.
\end{proof}

\begin{lem}\label{monGn}
Let $G_n$ be defined in \eqref{Gn}, $q\in(0,1)$, and $\alpha_0>0$ be
chosen such that $G_1>C\dl^\varepsilon$, $\varepsilon\in(0,1)$,
$C>1$. Then there exists a unique integer $n_c$ such that \be
G_{n_c-1}<G_{n_c} \text{ and }G_{n_c}>G_{n_c+1},\quad n_c\geq 1. \ee
Moreover,\be\label{GGnc} G_{n+1}<G_n,\quad \forall n\geq n_c.\ee
\end{lem}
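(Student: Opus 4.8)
The plan is to work directly with the recursion \eqref{Gn}, $G_n=qG_{n-1}+b_n$ with $G_0=0$, where for brevity I set $b_n:=(1-q)\alpha_0q^n\|Q_{\alpha_0q^n}^{-1}f_\dl\|$, $n\ge1$. Three observations drive the argument: $b_n$ is decreasing in $n$; the recursion is self-improving, in the sense that $G_{n+1}\le G_n$ forces $G_{n+2}<G_{n+1}$, so that once $G$ begins to decrease it keeps decreasing; and $G_n$ is bounded, by \lemref{lemda}, so it cannot increase indefinitely in view of the standing hypothesis $G_1>C\dl^\varepsilon$.

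To see that $b_n$ is (strictly) decreasing, write $b_n=(1-q)\psi(\alpha_0q^n)$ with $\psi(a):=a\|Q_a^{-1}f_\dl\|$, and use the spectral resolution $\{E_s\}$ of the self-adjoint operator $Q=AA^*$ to obtain $\psi(a)^2=\int_0^\infty\frac{a^2}{(a+s)^2}\,d\langle E_sf_\dl,f_\dl\rangle$. Since $a\mapsto a^2/(a+s)^2$ is non-decreasing for each $s\ge0$ and strictly increasing for $s>0$, $\psi$ is non-decreasing, and strictly increasing provided $A^*f_\dl\ne0$. The latter is forced by the hypothesis $G_1>C\dl^\varepsilon$: if $A^*f_\dl=0$ then $Qf_\dl=0$, so $G_1=(1-q)\|f_\dl\|$, whereas $f=Ay\in\mathcal{R}(A)=\mathcal{N}(A^*)^\perp$ together with $\|f_\dl-f\|\le\dl$ gives $\|f_\dl\|\le\dl$, hence $G_1\le(1-q)\dl<C\dl^\varepsilon$, a contradiction. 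Since $\alpha_0q^n\downarrow0$, conclude $b_{n+1}<b_n$ for all $n\ge1$.

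For the self-improving property, suppose $G_{n+1}\le G_n$, i.e. $(1-q)G_n\ge b_{n+1}$; then $(1-q)G_{n+1}=q(1-q)G_n+(1-q)b_{n+1}\ge qb_{n+1}+(1-q)b_{n+1}=b_{n+1}>b_{n+2}$, so $G_{n+2}=qG_{n+1}+b_{n+2}<qG_{n+1}+(1-q)G_{n+1}=G_{n+1}$. Hence the set $S:=\{n\ge0:G_{n+1}\le G_n\}$ is upward closed, i.e. $S=\{n:n\ge n_c\}$ for some $n_c\in\N\cup\{\infty\}$, and $0\notin S$ because $G_1>0=G_0$, so $n_c\ge1$. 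For $n<n_c$ one has $G_{n+1}>G_n$, so $G_0<G_1<\cdots<G_{n_c}$, in particular $G_{n_c-1}<G_{n_c}$; for $n\ge n_c$ the self-improving step upgrades $G_{n+1}\le G_n$ to $G_{n+1}<G_n$, which is \eqref{GGnc} and in particular gives $G_{n_c}>G_{n_c+1}$. Uniqueness of $n_c$ is immediate: any $n$ with $G_{n-1}<G_n>G_{n+1}$ satisfies $n\in S$ and $n-1\notin S$, hence equals $\min S=n_c$.

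Finally, I would exclude $n_c=\infty$, equivalently show $G_n$ is not strictly increasing for all $n$. If it were, $G_n$ would be monotone and, by \lemref{lemda}, bounded above by $\frac{1}{1-\sqrt q}\sqrt{\alpha_0q^n}\frac{\|y\|}{2}+\dl$, hence convergent to some $L$ with $G_1\le L\le\dl<C\dl^\varepsilon$ (using $C\dl^{\varepsilon-1}\ge C>1$ for $\dl\le1$), contradicting the hypothesis; so $n_c<\infty$ and the proof is complete. The one place I expect to need genuine care is the monotonicity of $b_n$ — the spectral computation for $\psi$ and the verification that the hypothesis excludes $A^*f_\dl=0$; the self-improving identity and the appeal to \lemref{lemda} are then routine.
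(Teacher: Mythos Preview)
Your proof is correct and follows essentially the same route as the paper's: both establish that $a\mapsto a\|Q_a^{-1}f_\dl\|$ is increasing, use the recursion to show that once $G_n$ begins to decrease it keeps decreasing, and invoke \lemref{lemda} to guarantee the decrease eventually occurs. Your packaging via the self-improving implication $G_{n+1}\le G_n\Rightarrow G_{n+2}<G_{n+1}$ and the upward-closed set $S$ is a bit more streamlined than the paper's separate sub-lemma on $D_n:=\alpha_0q^{n+1}\|Q_{\alpha_0q^{n+1}}^{-1}f_\dl\|-G_n$, and you add the extra care of verifying $A^*f_\dl\ne0$ to force \emph{strict} monotonicity of $b_n$.
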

\begin{proof}
From \lemref{lemda} we have $$ G_n\leq
\frac{1}{1-\sqrt{q}}\sqrt{\alpha_0q^n}\frac{\|y\|}{2}+\dl ,\quad
n\geq 1,\quad q\in(0,1).$$ Since $G_1>C\dl^\varepsilon$ and
$\limsup_{n\to \infty}G_n\leq \dl<C\dl^\varepsilon$, it follows that
there exists an integer $n_c\geq 1$ such that $G_{n_c-1}<G_{n_c}$
and $G_{n_c}>G_{n_c+1}.$ Let us prove the monotonicity of $G_n$, for
$n\geq n_c$. We have $G_{n_c+1}-G_{n_c}<0$. Using definition
\eqref{Gn}, we get \be\begin{split}
G_{n_c+1}-G_{n_c}&=qG_{n_c}+(1-q)\alpha_0q^{n_c+1}\|Q_{\alpha_0q^{n_c+1}}^{-1}f_\dl\|-G_{n_c}\\
&=(1-q)\left(\alpha_0q^{n_c+1}\|Q_{\alpha_0q^{n_c+1}}^{-1}f_\dl\|-G_{n_c}\right)<0.
\end{split}\ee This implies \be\label{AsG}
\alpha_0q^{n_c+1}\|\alpha_0Q_{q^{n_c+1}}^{-1}f_\dl\|-G_{n_c}<0. \ee
Note that \bee
G_{n+1}-G_n=(1-q)(\alpha_0q^{n+1}\|Q_{\alpha_0q^{n+1}}^{-1}f_\dl\|-G_{n}).\eee
Therefore, to prove the monotonicity of $G_n$ for $n\geq n_c$, one
needs to prove the inequality
$$\alpha_0q^{n+1}\|Q_{\alpha_0q^{n+1}}^{-1}f_\dl\|-G_{n}<0,\quad\forall n\geq n_c.$$
This inequality is a consequence of the following lemma:
\begin{lem}\label{lemqG}
Let $G_n$ be defined in \eqref{Gn}, and \eqref{AsG} holds. Then
\be\label{QGn}
\alpha_0q^{n+1}\|Q_{\alpha_0q^{n+1}}^{-1}f_\dl\|-G_{n}<0,\quad
\forall n\geq n_c. \ee
\end{lem}
\begin{proof}Let us prove \lemref{lemqG} by induction.
Let
$$D_n:=\alpha_0q^{n+1}\|Q_{\alpha_0q^{n+1}}^{-1}f_\dl\|-G_n$$ and
$$h(a):=a^2\|Q_a^{-1}f_\dl\|^2.$$ The function $h(a)$ is a monotonically growing function of $a$, $a>0$. Indeed, by the spectral
theorem, we get
 \be\begin{split} h(a_1)&=a_1^2\|Q_{a_1}^{-1}f_\dl\|^2=\int_0^\infty \frac{a_1^2}{(a_1+s)^2}d\langle
F_sf_\dl,f_\dl\rangle\\
&\leq\int_0^\infty \frac{a_2^2}{(a_2+s)^2}d\langle
F_sf_\dl,f_\dl\rangle=a_2^2\|Q_{a_2}^{-1}f_\dl\|^2=h(a_2),
\end{split}\ee where $F_s$ is the resolution of the identity
corresponding to $Q:=AA^*$, because
$\frac{a_1^2}{(a_1+s)^2}\leq\frac{a_2^2}{(a_2+s)^2}$ if $0<a_1<a_2$
and $s\geq0$.
 By the assumption we have
$D_{n_c}=\alpha_0q^{n_c+1}\|Q_{\alpha_0q^{n_c+1}}^{-1}f_\dl\|-G_{n_c}<0.$
Thus, relation \eqref{QGn} holds for $n=n_c.$ For $n=n_c+1$ we get
\be\begin{split}
D_{n_c+1}&=\alpha_0q^{n_c+2}\|Q_{\alpha_0q^{n_c+2}}^{-1}f_\dl\|-(1-q)\alpha_0q^{n_c+1}\|Q_{\alpha_0q^{n_c+1}}^{-1}f_\dl\|-qG_{n_c}\\
&=\sqrt{h(\alpha_0q^{n_c+2})}-\sqrt{h(\alpha_0q^{n_c+1}
)}+q\sqrt{h(\alpha_0q^{n_c+1}
)}-qG_{n_c}\\
&=\sqrt{h(\alpha_0q^{n_c+2})}-\sqrt{h(\alpha_0q^{n_c+1}
)}+q(\sqrt{h(\alpha_0q^{n_c+1}
)}-G_{n_c})\\
&=\sqrt{h(\alpha_0q^{n_c+2})}-\sqrt{h(\alpha_0q^{n_c+1}
)}+qD_{n_c}\\
&=\sqrt{h(\alpha_0q^{n_c+2})}-\sqrt{h(\alpha_0q^{n_c+1}
)}+qD_{n_c}<0.\end{split}\ee

Here we have used the monotonicity of the function $h(a).$ Thus,
relation \eqref{QGn} holds for $n=n_c+1.$ Suppose \bee D_n<0,\quad
n_c\leq n\leq n_c+k-1.\eee This, together with the monotonically
growth of the function $h(a):=a^2\|Q_q^{-1}f_\dl\|^2$, yields
\be\begin{split}
D_{n_c+k}&=\alpha_0q^{n_c+k+1}\|Q_{\alpha_0q^{n_c+k+1}}^{-1}f_\dl\|-G_{n_c+k}\\
&=\sqrt{h(\alpha_0q^{n_c+k+1})}-(1-q)\sqrt{h(\alpha_0q^{n_c+k})}-qG_{n_c+k-1}\\
&=\sqrt{h(\alpha_0q^{n_c+k+1})}-\sqrt{h(\alpha_0q^{n_c+k})}+q(\sqrt{h(\alpha_0q^{n_c+k})}-G_{n_c+k-1})\\
&=\sqrt{h(\alpha_0q^{n_c+k+1})}-\sqrt{h(\alpha_0q^{n_c+k})}+qD_{n_c+k-1}\\
&=\sqrt{h(\alpha_0q^{n_c+k+1})}-\sqrt{h(\alpha_0q^{n_c+k})}+qD_{n_c+k-1}<0.
\end{split}\ee
Thus, $D_n<0,\ n\geq 1.$ \lemref{lemqG} is proved.
\end{proof}
Let us continue with the proof of \lemref{monGn}. From relation
\eqref{Gn} we have \bee\begin{split}
G_{n+1}-G_n&=(q-1)G_n+(1-q)\alpha_0q^{n+1}\|Q_{\alpha_0q^{n+1}}^{-1}f_\dl\|\\
&=(1-q)\left(\alpha_0q^{n+1}\|Q_{\alpha_0q^{n+1}}^{-1}f_\dl\|-G_n\right).
\end{split}\eee Using assumption \eqref{AsG} and applying
\lemref{lemqG}, one gets \bee G_{n+1}-G_n<0,\quad \forall n\geq n_c.
\eee Let us prove that the integer $n_c$ is unique. Suppose there
exists another integer $n_d$ such that $G_{n_d-1}<G_{n_d}$ and
$G_{n_d}>G_{n_d+1}$. One may assume without loss of generality that
$n_c<n_d.$ Since $G_{n}>G_{n+1},\ \forall n\geq n_c$, and $n_c<n_d$,
it follows that $G_{n_d-1}>G_{n_d}$. This contradicts the assumption
$G_{n_d-1}<G_{n_d}.$ Thus, the integer $n_c$ is unique.
\lemref{monGn} is proved.
\end{proof}

\begin{lem}\label{Gnmon}
Let $G_n$ be defined in \eqref{Gn}. If $\alpha_0$ is chosen such
that relations $G_1>C\dl^\varepsilon$, $C>1$, $\varepsilon\in(0,1)$,
holds then there exists a unique $n_\dl$ satisfying inequality
\eqref{rule1}.
\end{lem}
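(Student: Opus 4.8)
The plan is to recast the statement in purely order-theoretic terms. An integer $n_\dl$ satisfies \eqref{rule1} exactly when it is the least element of
\[
S:=\{\,n\geq 1:\ G_n\leq C\dl^\varepsilon\,\};
\]
indeed, $G_{n_\dl}\leq C\dl^\varepsilon$ says $n_\dl\in S$, while $C\dl^\varepsilon<G_n$ for all $1\leq n<n_\dl$ says that no smaller integer belongs to $S$. So the whole lemma reduces to showing $S\neq\emptyset$: once this is known, $n_\dl:=\min S$ exists, its minimality forces $G_n>C\dl^\varepsilon$ for every $1\leq n<n_\dl$, and uniqueness is immediate (if $n'$ also satisfies \eqref{rule1} then $n'\in S$, so $n'\geq n_\dl$, while $G_n>C\dl^\varepsilon$ for $n<n'$ rules out $n_\dl<n'$ because $G_{n_\dl}\leq C\dl^\varepsilon$; hence $n'=n_\dl$). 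The hypothesis $G_1>C\dl^\varepsilon$ additionally gives $1\notin S$, so automatically $n_\dl\geq 2$.

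To prove $S\neq\emptyset$ I would invoke \lemref{lemda}, which yields
\[
G_n\leq \frac{1}{1-\sqrt{q}}\sqrt{\alpha_0q^n}\,\frac{\|y\|}{2}+\dl,\qquad n\geq 1 .
\]
Since $q\in(0,1)$ the first term on the right tends to $0$ geometrically, hence $\limsup_{n\to\infty}G_n\leq\dl$. Because $C>1$ and $\varepsilon\in(0,1)$, for the relevant small values of $\dl$ one has $\dl^{1-\varepsilon}<C$, i.e. $\dl<C\dl^\varepsilon$; therefore $G_n\leq C\dl^\varepsilon$ for all sufficiently large $n$, so $S$ contains a whole tail of $\N$ and is in particular nonempty. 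Alternatively one may quote \lemref{monGn}: for $n\geq n_c$ the sequence $G_n$ is strictly decreasing with limit $\leq\dl<C\dl^\varepsilon$, which again places all large $n$ in $S$; this route also makes transparent that once $G_n$ drops below the threshold it stays below it.

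The only mildly delicate point is the comparison $\dl<C\dl^\varepsilon$, which holds precisely in the regime $\dl\to 0$ that the paper is concerned with (for fixed $C>1$ and $\varepsilon\in(0,1)$ it is just $\dl<C^{1/(1-\varepsilon)}$, and $C^{1/(1-\varepsilon)}>1$); apart from this, the argument is the elementary fact that a nonempty set of positive integers has a least element, so I do not anticipate a genuine obstacle. The real content has already been supplied by \lemref{lemda} (the a priori bound guaranteeing that $G_n$ eventually falls below $C\dl^\varepsilon$), with the monotonicity of \lemref{monGn} available as an alternative.
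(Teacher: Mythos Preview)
Your argument is correct, and for existence it is exactly what the paper does: use \lemref{lemda} to get $\limsup_{n\to\infty}G_n\leq\dl<C\dl^\varepsilon$, hence some $G_n$ drops below the threshold. The paper likewise tacitly assumes $\dl<C\dl^\varepsilon$, so your remark on that point is a fair clarification rather than a new hypothesis.

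The one genuine difference is in the uniqueness half. You observe that $n_\dl$ satisfying \eqref{rule1} is, by definition, the least element of $S=\{n\geq1:G_n\leq C\dl^\varepsilon\}$, so uniqueness is a tautology---the minimum of a nonempty subset of $\N$ is unique. The paper instead appeals to the eventual monotonicity of $G_n$ established in \lemref{monGn}. Your route is shorter and shows that \lemref{monGn} is not logically needed for \lemref{Gnmon}; what monotonicity buys, as you note in passing, is the practical guarantee that once $G_n$ crosses the threshold it stays below it, so an implementer can stop checking. That is useful information for the algorithm but not required for the bare existence-and-uniqueness statement.
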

\begin{proof}Let us show that there exists an integer
$n_\dl$ so that inequality \eqref{rule1} holds. Applying
\lemref{lemda}, one gets \be \limsup_{n\to \infty}G_n\leq \dl. \ee
Since $G_1>C\dl^\varepsilon$ and $\limsup_{n\to \infty}G_n\leq
\dl<C\dl^\varepsilon$, it follows that there exists an index $n_\dl$
satisfying stopping rule \eqref{rule1}. The uniqueness of the index
$n_\dl$ follows from the monotonicity of $G_n$, see \lemref{monGn}.
Thus, \lemref{Gnmon} is proved.
\end{proof}
\begin{lem}\label{lemand}Let $Ay=f$, $y \perp \mathcal{N}(A)$, and $n_\dl$ be chosen by rule \eqref{rule1}.
Then \be\label{dqn} \lim_{\dl\to 0}q^{n_\dl}= 0,\quad q\in(0,1), \ee
so \be\label{ndl} \lim_{\dl\to 0}n_\dl= \infty.\ee
\end{lem}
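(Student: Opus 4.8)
The plan is to argue by contradiction. Suppose \eqref{dqn} fails. Then there is a sequence $\dl_k \to 0$ and a constant $c_0 \in (0,1]$ with $q^{n_{\dl_k}} \geq c_0$ for all $k$; equivalently, the integers $n_{\dl_k}$ stay bounded, say $n_{\dl_k} \leq N$ for some fixed $N$. The idea is that a bounded stopping index forces the discrepancy-type quantity $G_{n_{\dl_k}}$ to stay bounded \emph{below} by a positive constant, while the stopping rule \eqref{rule1} says $G_{n_{\dl_k}} \leq C\dl_k^\varepsilon \to 0$, which is the desired contradiction.

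To make the lower bound precise, recall from \eqref{GN}–\eqref{Gn} that $G_n$ is built from the quantities $\alpha_0 q^{j+1}\|Q_{\alpha_0 q^{j+1}}^{-1} f_\dl\|$, and from the manipulations in the proof of \lemref{lemda} one has the identity $\alpha_0 q^{j+1}\|Q_{\alpha_0 q^{j+1}}^{-1} f_\dl\| = \|AT_{\alpha_0 q^{j+1}}^{-1}A^* f_\dl - f_\dl\|$. As $\dl \to 0$, $f_\dl \to f$, and $AT_a^{-1}A^* f - f = -aQ_a^{-1} f \to -P_{\mathcal{N}(A^*)} f$ in norm as $a \to 0$ (spectral theorem applied to $Q = AA^*$). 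Since $Ay = f$ is solvable, $f \in \mathcal{R}(A) \perp \mathcal{N}(A^*)$, so $P_{\mathcal{N}(A^*)} f = 0$; hence for each fixed $j$, $\alpha_0 q^{j+1}\|Q_{\alpha_0 q^{j+1}}^{-1} f_\dl\| \to 0$ as $\dl \to 0$. This tells us each individual term is small, not large — so the naive contradiction via a lower bound on $G_{n_{\dl}}$ does not work directly. Instead I would use the \emph{first} strict inequality in \eqref{rule1}: since $n_{\dl_k} \leq N$, the rule guarantees $G_n > C\dl_k^\varepsilon$ fails to hold for all $n$ in the range $1 \leq n < n_{\dl_k}$ only up to $n_{\dl_k}-1 < N$; more usefully, stopping at $n_{\dl_k}$ with $n_{\dl_k} \leq N$ together with $G_1 > C\dl_k^\varepsilon$ (assumption \eqref{Asfd}) pins $G_{n_{\dl_k}}$ between the two, and monotonicity of $G_n$ past $n_c$ (\lemref{monGn}) lets me say $G_n \leq G_{n_{\dl_k}} \leq C\dl_k^\varepsilon$ for all $n \geq n_{\dl_k}$. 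But $G_1 \leq \max_{1\le n \le N} G_n$, and along the subsequence each $G_n$ with $n \leq N$ converges to the noise-free value $G_n^{(0)} := \sum_{j=0}^{n-1}(q^{n-j-1}-q^{n-j})\alpha_0 q^{j+1}\|Q_{\alpha_0 q^{j+1}}^{-1} f\|$, which is a fixed positive number (positive because $f \neq 0$ when $y \neq 0$; the degenerate case $y = 0$ is trivial). Then $C\dl_k^\varepsilon \geq G_{n_{\dl_k}}$ and I need a positive lower bound on $G_{n_{\dl_k}}$ uniformly in $k$ — this is where the argument must be assembled carefully, selecting a further subsequence on which $n_{\dl_k}$ is \emph{constant}, equal to some $n^* \leq N$, so that $G_{n_{\dl_k}} \to G_{n^*}^{(0)} > 0$, contradicting $G_{n_{\dl_k}} \leq C\dl_k^\varepsilon \to 0$.

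The main obstacle is the one just flagged: passing from "$n_\dl$ bounded" to a genuine positive lower bound on $G_{n_\dl}$. The resolution is a pigeonhole/subsequence step — a bounded integer sequence takes some value $n^*$ infinitely often — combined with the continuity of $f_\dl \mapsto G_n$ for each fixed $n$ (the map $f_\dl \mapsto \alpha_0 q^{j+1}\|Q_{\alpha_0 q^{j+1}}^{-1} f_\dl\|$ is norm-continuous since $Q_{\alpha_0 q^{j+1}}^{-1}$ is a bounded operator with norm $\leq (\alpha_0 q^{j+1})^{-1}$), so $G_{n^*}(f_{\dl_k}) \to G_{n^*}(f) = G_{n^*}^{(0)}$. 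One must also dispose of the case $f = 0$ (then $y = 0$ and \eqref{dqn} is vacuous or the hypothesis $G_1 > C\dl^\varepsilon$ cannot hold for small $\dl$) and confirm $G_{n^*}^{(0)} > 0$ whenever $f \neq 0$, which follows because at least one spectral component of $f$ relative to $Q$ is nonzero and each coefficient $q^{n^*-j-1}-q^{n^*-j}$ is strictly positive. Finally, \eqref{ndl} is immediate from \eqref{dqn} since $q \in (0,1)$ forces $n_\dl \to \infty$ whenever $q^{n_\dl} \to 0$.
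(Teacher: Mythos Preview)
Your final argument is correct and takes a genuinely different route from the paper. The paper first extracts from the recursion \eqref{Gn} together with the two-sided inequality in \eqref{rule1} the single-term bound
\[
\alpha_0 q^{n_\dl}\|Q_{\alpha_0 q^{n_\dl}}^{-1}f_\dl\| < C\dl^\varepsilon,
\]
and then derives the contradiction via a spectral lower bound: using the resolution of the identity $F_s$ for $Q$, it shows that $h(\dl,\alpha):=\alpha^2\|Q_\alpha^{-1}f_\dl\|^2$ is bounded below by a fixed positive constant, uniformly for all small $\dl$, whenever $\alpha\geq c_1>0$. Your argument instead works with $G_{n_\dl}$ directly: bounded $n_\dl$ plus pigeonhole gives a subsequence with $n_{\dl_k}\equiv n^*$, and norm continuity of $f_\dl\mapsto G_{n^*}(f_\dl)$ yields $G_{n^*}(f_{\dl_k})\to G_{n^*}^{(0)}>0$, contradicting $G_{n_{\dl_k}}\leq C\dl_k^\varepsilon\to 0$. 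Your route is more elementary (no spectral theorem needed for the lower bound) and bypasses the reduction to a single term; the paper's approach is a bit more quantitative since its lower bound is uniform in $\dl$ rather than obtained only along a subsequence.

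One internal slip to clean up: in your exploratory paragraph you assert that for each fixed $j$, $\alpha_0 q^{j+1}\|Q_{\alpha_0 q^{j+1}}^{-1}f_\dl\|\to 0$ as $\dl\to 0$. This is false --- for fixed $j$ the limit as $\dl\to 0$ is $\alpha_0 q^{j+1}\|Q_{\alpha_0 q^{j+1}}^{-1}f\|$, which is strictly positive when $f\neq 0$; you conflated the limit $\dl\to 0$ with the separate limit $a\to 0$. The error is harmless to your actual proof, and indeed its negation is precisely what you invoke later to get $G_{n^*}^{(0)}>0$. Just delete that exploratory passage.
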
\begin{proof}
From rule \eqref{rule1} and relation \eqref{Gn} we have
\be\begin{split}
qC\dl^\varepsilon+(1-q)\alpha_0q^{n_\dl}\|Q_{\alpha_0q^{n_\dl}}^{-1}f_\dl\|&<qG_{n_\dl-1}+(1-q)\alpha_0q^{n_\dl}\|Q_{\alpha_0q^{n_\dl}}^{-1}f_\dl\|\\
&=G_{n_\dl}\leq C\dl^\varepsilon,
\end{split}\ee so
\be (1-q)\alpha_0q^{n_\dl}\|Q_{\alpha_0q^{n_\dl}}^{-1}f_\dl\|\leq
(1-q)C\dl^\varepsilon. \ee Thus, \be\label{uze}
\alpha_0q^{n_\dl}\|Q_{\alpha_0q^{n_\dl}}^{-1}f_\dl\|<C\dl^\varepsilon.
\ee Note that if $f\neq 0$ then there exists a $\lambda_0>0$ such
that \be\label{Flo} F_{\lambda_0}f\neq 0,\quad \langle
F_{\lambda_0}f,f\rangle:=\xi>0, \ee where $\xi$ is a constant which
does not depend on $\dl$, and $F_s$ is the resolution of the
identity corresponding to the operator $Q:=AA^*$. Let
$$h(\dl,\alpha):=\alpha^2\|Q_{\alpha}^{-1}f_\dl\|^2.$$ For a
fixed number $c_1>0$ we obtain \be\begin{split}
h(\dl,c_1)&=c_1^2\|Q_{c_1}f_\dl\|^2=\int_0^\infty
\frac{c_1^2}{(c_1+s)^2}d\langle F_sf_\dl,f_\dl\rangle\geq
\int_0^{\lambda_0} \frac{c_1^2}{(c_1+s)^2}d \langle
F_sf_\dl,f_\dl\rangle\\
&\geq \frac{c_1^2}{(c_1+\lambda_0)^2}\int_0^{\lambda_0} d\langle
F_sf_\dl,f_\dl\rangle=\frac{c_1^2\|F_{\lambda_0}f_\dl\|^2}{(c_1+\lambda_0)^2},\quad
\dl>0.
\end{split}\ee
Since $F_{\lambda_0}$ is a continuous operator, and
$\|f-f_\dl\|<\dl$, it follows from \eqref{Flo} that \be \lim_{\dl\to
0}\langle F_{\lambda_0}f_\dl,f_\dl\rangle=\langle
F_{\lambda_0}f,f\rangle>0 .\ee Therefore, for the fixed number
$c_1>0$ we get \be\label{hc1} h(\dl,c_1)\geq c_2>0\ee for all
sufficiently small $\dl>0$, where $c_2$ is a constant which does not
depend on $\dl$. For example one may take $c_2=\frac{\xi}{2}$
provided that \eqref{Flo} holds. Let us derive from estimate
\eqref{uze} and the relation \eqref{hc1} that $q^{n_\dl}\to 0$ as
$\dl\to 0$. From \eqref{uze} we have
$$0\leq h(\dl,\alpha_0q^{n_\dl})\leq (C\dl^\varepsilon)^2.$$ Therefore,
\be\label{limhq}\lim_{\dl\to 0} h(\dl, \alpha_0q^{n_\dl})=0.\ee
Suppose $\lim_{\dl\to 0}q^{n_\dl}\neq 0$. Then there exists a
subsequence $\dl_j\to 0$ such that \be \alpha_0q^{n_{\dl_j}}\geq
c_1>0, \ee where $c_1$ is a constant. By \eqref{hc1} we
get\be\label{hdj} h(\dl_j,\alpha_0q^{n_{\dl_j}})>c_2>0,\quad
\dl_j\to 0\text{ as }j\to \infty. \ee This contradicts relation
\eqref{limhq}. Thus, $\lim_{\dl\to0} q^{n_\dl}=0.$ \lemref{lemand}
is proved.
\end{proof}

\begin{lem}\label{lemdan}
Let $n_\dl$ be chosen by rule \eqref{rule1}. Then \be\label{dan}
\frac{\dl}{\sqrt{\alpha_0q^{n_\dl}}}\to 0\text{ as }\dl\to 0.\ee
\end{lem}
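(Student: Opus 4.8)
The quantity $\dl/\sqrt{\alpha_0 q^{n_\dl}}$ tends to $0$ precisely when $\alpha_0 q^{n_\dl}$ decays no faster than $\dl^2$, so the plan is to extract a lower bound of the form $\alpha_0 q^{n_\dl}\geq c\,\dl^{2\varepsilon}$ with a constant $c>0$; since $\varepsilon\in(0,1)$ this already gives $\dl/\sqrt{\alpha_0 q^{n_\dl}}\leq c^{-1/2}\dl^{1-\varepsilon}\to 0$. The lower bound on $\alpha_0 q^{n_\dl}$ has to come from the left inequality in stopping rule \eqref{rule1}, which says that $G_n$ is still large just before we stop; it cannot come from \eqref{uze} in \lemref{lemand}, which bounds $\alpha_0 q^{n_\dl}$ from above and hence goes the wrong way. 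The standing hypothesis $G_1>C\dl^\varepsilon$ forces $n_\dl\geq 2$, so the index $n_\dl-1$ lies in the range $1\leq n<n_\dl$ and therefore $G_{n_\dl-1}>C\dl^\varepsilon$.

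First I would apply \lemref{lemda} (inequality \eqref{Gn2d}) with $n=n_\dl-1\geq 1$ and combine it with $G_{n_\dl-1}>C\dl^\varepsilon$ to obtain
\be
C\dl^\varepsilon<G_{n_\dl-1}\leq \frac{\|y\|}{2(1-\sqrt q)}\sqrt{\alpha_0 q^{n_\dl-1}}+\dl=\frac{\|y\|}{2\sqrt q(1-\sqrt q)}\sqrt{\alpha_0 q^{n_\dl}}+\dl .
\ee
Next, since $\varepsilon\in(0,1)$ and $C>1$, one has $\dl^{1-\varepsilon}\to 0$, so for all sufficiently small $\dl>0$ the term $\dl$ on the right can be absorbed, e.g. $\dl\leq (C-1)\dl^\varepsilon$, which gives $C\dl^\varepsilon-\dl\geq \dl^\varepsilon$. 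Hence
\be
\sqrt{\alpha_0 q^{n_\dl}}>\frac{2\sqrt q(1-\sqrt q)}{\|y\|}\,\dl^\varepsilon
\ee
for all small $\dl$, and dividing $\dl$ by this lower bound yields $\dl/\sqrt{\alpha_0 q^{n_\dl}}<\frac{\|y\|}{2\sqrt q(1-\sqrt q)}\dl^{1-\varepsilon}\to 0$ as $\dl\to 0$.

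The argument uses $\|y\|>0$, which I would dispose of in one line: if $\|y\|=0$ then $f=Ay=0$, and \eqref{Gn} together with the estimate $a\|Q_a^{-1}\|\leq 1$ gives $G_1\leq (1-q)\|f_\dl\|\leq (1-q)\dl<C\dl^\varepsilon$ for small $\dl$, contradicting the hypothesis $G_1>C\dl^\varepsilon$. I do not expect any genuine obstacle here: the lemma is a short consequence of the upper bound for $G_{n_\dl-1}$ furnished by \lemref{lemda} and the lower bound for $G_{n_\dl-1}$ built into the stopping rule. The only point to watch is the direction of the inequalities — one needs $G_{n_\dl-1}$ to be large rather than $G_{n_\dl}$ to be small — and the elementary fact that $\dl=o(\dl^\varepsilon)$ as $\dl\to0$, which is what makes both the absorption of the $\dl$ term and the final limit work.
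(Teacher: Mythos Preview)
Your proof is correct and follows essentially the same route as the paper: combine the upper bound $G_{n_\dl-1}\leq \frac{\|y\|}{2(1-\sqrt q)}\sqrt{\alpha_0 q^{n_\dl-1}}+\dl$ from \lemref{lemda} with the lower bound $G_{n_\dl-1}>C\dl^\varepsilon$ from the stopping rule, absorb the $\dl$ term using $\dl=o(\dl^\varepsilon)$, and conclude $\dl/\sqrt{\alpha_0 q^{n_\dl}}=O(\dl^{1-\varepsilon})\to 0$. Your extra care in checking $n_\dl\geq 2$ and disposing of the degenerate case $\|y\|=0$ is not in the paper's proof but is welcome; otherwise the arguments are the same.
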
\begin{proof} Relation \eqref{Gn2d}, together with stopping rule
\eqref{rule1}, implies \be C\dl^\varepsilon<G_{n_\dl-1}\leq
\frac{1}{1-\sqrt{q}}\frac{\sqrt{\alpha_0q^{n_\dl-1}}}{2}\|y\|+\dl.
\ee Then \be \frac{1}{\sqrt{\alpha_0q^{n_\dl-1}}}\leq
\frac{\|y\|}{2(1-\sqrt{q})\dl^\varepsilon(C-1) },\quad
\varepsilon\in(0,1). \ee This yields \be \lim_{\dl\to
0}\frac{\dl}{\sqrt{\alpha_0q^{n_\dl}}}=\lim_{\dl\to
0}\frac{\dl}{\sqrt{\alpha_0qq^{n_\dl-1}}}\leq \lim_{\dl\to
0}\frac{\dl^{1-\varepsilon}}{2\sqrt{q}(1-\sqrt{q})(C-1)}\|y\|=0. \ee
\lemref{lemdan} is proved.  \end{proof}

\begin{thm}\label{mthm}
Let $y\perp \mathcal{N}(A)$, $\|f_\dl-f\|\leq \dl$,
$\|f_\dl\|>C\dl^\varepsilon,$ $C>1,$ $\varepsilon \in(0,1).$ Suppose
$n_\dl$ is chosen by rule \eqref{rule1}. Then \be\label{limuy}
\lim_{\dl\to 0}\|u_{n_\dl}^\dl-y\|=0 ,\ee where $u_n^\dl$ is given
in \eqref{itn1}.
\end{thm}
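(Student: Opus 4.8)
The plan is to bound $\|u_{n_\dl}^\dl - y\|$ by splitting it via the triangle inequality into the ``noise propagation'' term $\|u_{n_\dl}^\dl - u_{n_\dl}\|$ and the ``regularization error'' term $\|u_{n_\dl} - y\|$, and to show each tends to $0$ as $\dl\to 0$. For the first term, \lemref{lemudu} gives
\bee
\|u_{n_\dl}^\dl - u_{n_\dl}\| \leq \frac{\sqrt{q}}{1-q^{3/2}} w_{n_\dl}, \qquad w_{n_\dl} = (1-q)\frac{\dl}{2\sqrt{q}\sqrt{\alpha_0 q^{n_\dl}}},
\eee
so this term is a constant multiple of $\dl/\sqrt{\alpha_0 q^{n_\dl}}$, which goes to $0$ by \lemref{lemdan}. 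This half is essentially immediate once the earlier lemmas are invoked.

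The second term is the main work. The idea is to use the estimate \eqref{UB} from \lemref{lemex},
\bee
\|u_{n_\dl} - y\| \leq q^{n_\dl}\|y\| + \sum_{j=0}^{n_\dl-1}\big(q^{n_\dl-j-1}-q^{n_\dl-j}\big)\alpha_0 q^{j+1}\|T_{\alpha_0 q^{j+1}}^{-1}y\|,
\eee
and to control the sum. By \lemref{lemand}, $q^{n_\dl}\to 0$, so the first summand vanishes; the remaining sum is exactly of the form treated in \lemref{lemq} with $g(a) = a\|T_a^{-1}y\|$, but with the number of terms $n_\dl$ depending on $\dl$. Since $n_\dl\to\infty$ as $\dl\to 0$ (again \lemref{lemand}), one expects the sum to converge to $g_0 = \lim_{a\to 0^+} a\|T_a^{-1}y\| = \|P_{\mathcal{N}(A)}y\| = 0$, using $y\perp\mathcal{N}(A)$ and the spectral theorem as in the proof of \lemref{lemex}. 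The cleanest way to make this rigorous is to observe that the partial-sum quantity, call it $S_n := q^n\|y\| + \sum_{j=0}^{n-1}(q^{n-j-1}-q^{n-j})\alpha_0 q^{j+1}\|T_{\alpha_0 q^{j+1}}^{-1}y\|$, satisfies $S_n\to 0$ as $n\to\infty$ by \lemref{lemex} (that is precisely \eqref{conu}), and that $\|u_{n_\dl}-y\|\leq S_{n_\dl}$; then since $n_\dl\to\infty$, we get $\|u_{n_\dl}-y\| = S_{n_\dl}\to 0$ along any sequence $\dl\to 0$.

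Putting the two pieces together:
\bee
\|u_{n_\dl}^\dl - y\| \leq \frac{\sqrt{q}}{1-q^{3/2}}(1-q)\frac{\dl}{2\sqrt{q}\sqrt{\alpha_0 q^{n_\dl}}} + S_{n_\dl},
\eee
and both terms tend to $0$ as $\dl\to 0$ by \lemref{lemdan} and \lemref{lemand} respectively, which yields \eqref{limuy}.

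The step I expect to be the main obstacle is the interchange of limits in the regularization-error term: the bound \eqref{UB} is indexed by the $\dl$-dependent integer $n_\dl$, so one must be careful that ``$S_{n_\dl}\to 0$'' genuinely follows. It does, because $S_n$ is a fixed null sequence (independent of $\dl$) and $n_\dl\to\infty$, so for any $\varepsilon>0$ choosing $\dl$ small enough forces $n_\dl$ large enough that $S_{n_\dl}<\varepsilon$. The only subtlety is confirming that \lemref{lemand} indeed gives $n_\dl\to\infty$ unconditionally (it does, via $q^{n_\dl}\to 0$ and $q\in(0,1)$), and that the hypothesis $\|f_\dl\|>C\dl^\varepsilon$ of the theorem is what guarantees $G_1 > C\dl^\varepsilon$ so that the earlier lemmas about $n_\dl$ apply — this should be checked, e.g. via $G_1 = (1-q)\alpha_0 q\|Q_{\alpha_0 q}^{-1}f_\dl\|$ together with a lower bound on $\alpha_0 q\|Q_{\alpha_0 q}^{-1}f_\dl\|$ in terms of $\|f_\dl\|$, or by appealing directly to assumption \eqref{Asfd}.
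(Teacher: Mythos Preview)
Your proposal is correct and follows essentially the same approach as the paper: the same triangle-inequality split into $I_1=\|u_{n_\dl}^\dl-u_{n_\dl}\|$ and $I_2=\|u_{n_\dl}-y\|$, with $I_1\to 0$ via \lemref{lemudu} and \lemref{lemdan}, and $I_2\to 0$ via \lemref{lemex} combined with $n_\dl\to\infty$ from \lemref{lemand}. Your extra care about the ``interchange of limits'' for $S_{n_\dl}$ and about how $G_1>C\dl^\varepsilon$ is secured (namely by assumption \eqref{Asfd}) is accurate and makes explicit what the paper leaves implicit.
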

\begin{proof}
Using \lemref{lemex} and \lemref{lemudu}, we get the estimate
\be\begin{split}\label{convuy} \|u_{n_\dl}^\dl-y\|&\leq
\|u_{n_\dl}^\dl-u_{n_\dl}\|+\|u_{n_\dl}-y\|\leq
\frac{\sqrt{q}}{1-q^{3/2}}(1-q)\frac{\delta}{2q\sqrt{\alpha_0q^{n_\dl}}}+\|u_{n_\dl}-y\|\\
&:=I_1+I_2,
  \end{split}\ee where
$I_1:=\frac{\sqrt{q}}{1-q^{3/2}}(1-q)\frac{\delta}{2q\sqrt{\alpha_0q^{n_\dl}}}$
and $I_2:=\|u_{n_\dl}-y\|.$ Applying \lemref{lemdan}, one gets
$\lim_{\dl\to 0}I_1=0.$ Since $n_\dl\to \infty$ as $\dl\to 0$, it
follows from \lemref{lemex} that $\lim_{\dl\to 0} I_2=0.$ Thus,
$\lim_{\dl\to 0}\|u_{n_\dl}^\dl-y\|=0.$ \thmref{mthm} is proved.
\end{proof}
The algorithm based on the proposed method can be stated as follows:
\begin{itemize}
\item[Step 1.] Assume that \eqref{fdf} holds. Choose $C\in(1,2)$ and $\varepsilon\in(0.9,1)$. Fix
$q\in(0,1)$, and choose $\alpha_0>0$ so that \eqref{Asfd} holds.
Set $n=1$, and $u_0=0$.
\item[Step 2.] Use iterative scheme \eqref{itn1} to calculate $u_n$.
\item[Step 3.] Calculate $G_n$, where $G_n$ is defined in \eqref{Gn}.
\item[Step 4.] If $G_n\leq C\dl^\varepsilon$ then stop the iteration, set
$n_\dl=n$, and take $u_{n_\dl}^\dl$ as the approximate solution.
Otherwise set $n=n+1$, and go to Step 1.
\end{itemize}
\section{Iterative scheme 2}
In \cite{RAMM504} the following iterative scheme for the exact data
$f$ is given: \be\label{ItF} u_{n+1}=aT_a^{-1}u_n+T_a^{-1}A^*f,\quad
u_1=u_1\perp \mathcal{N}(A),\ee where $a$ is a fixed positive
constant. It is proved in \cite{RAMM504} that iterative scheme
\eqref{ItF} gives the relation
$$\lim_{n\to \infty}\|u_n-y\|=0,\quad y\perp \mathcal{N}(A).$$ In
the case of noisy data the exact data $f$ in \eqref{ItF} is replaced
with the noisy data $f_\dl$, i.e. \be\label{ItFn}
u_{n+1}^\dl=aT_a^{-1}u_n^\dl+T_a^{-1}A^*f_\dl,\quad u_1=u_1\perp
\mathcal{N}(A),\ee where $\|f_\dl-f\|\leq \dl$ for sufficiently
small $\dl>0$. It is proved in \cite{RAMM504} that there exist an
integer $n_\dl$ such that \be \lim_{\dl\to 0}\|u_{n_\dl}^\dl-y\|=0,
\ee where $u_n^\dl$ is the approximate solution corresponds to the
noisy data. But a method of choosing the integer $n_\dl$ has not
been discussed. In this section we modify iterative scheme
\eqref{ItF} by replacing the constant parameter $a$ in \eqref{ItF}
with a geometric sequence $\{q^{n-1}\}_{n=1}^\infty$, i.e.
\be\label{iram}
u_{n+1}=q^{n}T_{q^{n}}^{-1}u_n+T_{q^{n}}^{-1}A^*f,\quad u_1=0,\ee
where $q\in(0,1)$. The initial approximation $u_1$ is chosen to be
0. In general one may choose an arbitrary initial approximation
$u_1$ in the set $\mathcal{N}(A)^\perp.$ If the data are noisy then
the exact data $f$ in \eqref{iram} is replaced with the noisy data
$f_\dl$, and iterative scheme \eqref{ItFn} is replaced with:
\be\label{iram2}
u_{n+1}^\dl=q^{n}T_{q^{n}}^{-1}u_n^\dl+T_{q^{n}}^{-1}A^*f_\dl,\quad
u_1^\dl=0.\ee We prove convergence of the solution obtained by
iterative scheme \eqref{iram} in \thmref{thmnfcr} for arbitrary
$q\in(0,1)$, i.e. \bee \lim_{n\to \infty}\|u_n-y\|=0,\quad \forall
q\in(0,1). \eee In the case of noisy data we use discrepancy-type
principle \eqref{srule} to obtain the integer $n_\dl$ such that
\be\label{cno} \lim_{\dl\to 0}\|u_{n_\dl}^\dl-y\|=0. \ee We prove relation \eqref{cno}, for arbitrary $q\in(0,1)$, in \thmref{Mthm2}.\\
Let us prove that the sequence $u_n$, defined by iterative scheme
\eqref{iram}, converges to the minimal norm solution $y$ of equation
\eqref{11}.
\begin{thm}\label{thmnfcr}
Consider iterative scheme \eqref{iram}. Let $y\perp \mathcal{N}(A)$.
Then \be\label{ciram} \lim_{n\to \infty}\|u_n-y\|=0. \ee
\end{thm}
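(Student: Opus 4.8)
The first step is to obtain a closed-form (or at least a telescoping) expression for $u_n$ defined by the iterative scheme \eqref{iram}. Writing $u_{n+1}=q^{n}T_{q^{n}}^{-1}u_n+T_{q^{n}}^{-1}A^*f$ and using $q^{n}T_{q^{n}}^{-1}=I_m-T_{q^{n}}^{-1}T$, one can unroll the recursion starting from $u_1=0$. The product structure that appears, $\prod_{k=j}^{n-1}q^{k}T_{q^{k}}^{-1}$, should telescope or at least be bounded spectrally. Using the SVD (or the spectral theorem for $T=A^*A$ with resolution of the identity $E_s$), I would diagonalize everything: on the spectral interval corresponding to eigenvalue $s\ge 0$ the scheme acts as a scalar recursion $\phi_{n+1}(s)=\tfrac{q^{n}}{s+q^{n}}\phi_n(s)+\tfrac{1}{s+q^{n}}\sqrt{s}\,\tilde f(s)$ (after pairing with $A^*$), so that $u_n-y$ corresponds to the scalar quantity $\big(\prod_{k=1}^{n-1}\tfrac{q^{k}}{s+q^{k}}\big)$ times the component of $y$, plus a term that is a weighted average converging to the projection onto $\mathcal N(A)$.

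The second step is the convergence estimate itself. I would split $\|u_n-y\|^2=\int_0^\infty |r_n(s)|^2\,d\langle E_s y,y\rangle$ where $r_n(s)$ is the "residual polynomial'' $\prod_{k=1}^{n-1}\frac{q^{k}}{s+q^{k}}$ (this is the analogue of the factor $q^n y$ plus the $\alpha_0 q^{j+1}T_{\alpha_0 q^{j+1}}^{-1}y$ terms appearing in \lemref{lemex}, and indeed \eqref{iram} is a constant-weight rearrangement of \eqref{itnf1} with $\alpha_0=1$). For fixed $s>0$ one has $r_n(s)\to 0$ because infinitely many factors are $\le \frac{q^{k}}{s}\to 0$; for $s=0$ the factor is identically $1$, but $\langle E_{\{0\}}y,y\rangle=\|P_{\mathcal N(A)}y\|^2=0$ since $y\perp\mathcal N(A)$. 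Since $0\le r_n(s)\le 1$ for all $s\ge 0$ and $n$, the dominated convergence theorem gives $\|u_n-y\|^2\to 0$. Alternatively — and this is probably the route the authors take, to stay parallel with Section 2 — one invokes \lemref{lemq} directly: rewrite $u_n-y$ as a convex combination (with weights summing to $1-q^{n}$, as in \eqref{GN}) of the terms $-q^{j}T_{q^{j}}^{-1}y$ whose norms $g(q^{j}):=q^{j}\|T_{q^{j}}^{-1}y\|$ tend to $g_0=\|P_{\mathcal N(A)}y\|=0$ by the spectral theorem, exactly as in the proof of \lemref{lemex}; then \lemref{lemq} finishes it.

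The main obstacle is verifying that the recursion \eqref{iram}, whose regularization parameter $q^{n}$ changes at every step, still collapses to the same kind of telescoping sum as \eqref{itnf1}. In \lemref{lemex} the identity $T_{\alpha_0q^{j+1}}^{-1}A^*f=T_{\alpha_0q^{j+1}}^{-1}(T_{\alpha_0q^{j+1}}-\alpha_0q^{j+1}I_m)y=y-\alpha_0q^{j+1}T_{\alpha_0q^{j+1}}^{-1}y$ is the key algebraic trick; for \eqref{iram} one must check by induction that $u_{n+1}=y-\sum_{j=1}^{n}c_{j,n}\,q^{j}T_{q^{j}}^{-1}y$ for suitable coefficients $c_{j,n}\ge 0$ with $\sum_j c_{j,n}=1-q^{n}$ (or an analogous product form), carefully tracking how the factor $q^{n}T_{q^{n}}^{-1}$ redistributes the previous coefficients. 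Once that bookkeeping is done, uniform boundedness of $\|q^{j}T_{q^{j}}^{-1}\|\le 1$ together with \lemref{lemq} (or dominated convergence) yields \eqref{ciram} with no further difficulty.
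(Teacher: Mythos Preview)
Your first route (spectral representation plus pointwise convergence of the product $r_n(s)=\prod_{k=1}^{n-1}\frac{q^{k}}{q^{k}+s}$) is exactly the paper's approach; your guess that the authors instead parallel Section~2 via \lemref{lemq} is wrong. The paper reaches the product form far more cleanly than your unrolling/bookkeeping suggests: from the identity $y=aT_a^{-1}y+T_a^{-1}A^*f$ one subtracts the scheme to obtain directly $u_{n+1}-y=q^{n}T_{q^{n}}^{-1}(u_n-y)$, whence $u_n-y=\big(\prod_{k=1}^{n-1}q^{k}T_{q^{k}}^{-1}\big)(u_1-y)$ in one line, and the spectral theorem gives $\|u_n-y\|^2=\int_0^\infty r_n(s)^2\,d\langle E_sy,y\rangle$ immediately. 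No coefficients $c_{j,n}$ ever appear, so the ``main obstacle'' you flag simply does not arise. For the limit itself the paper first bounds $r_n(s)\le\big(\tfrac{q}{q+s}\big)^{n-1}$ (using monotonicity of $x\mapsto\tfrac{x}{x+s}$) and then does an explicit $\epsilon/2$ split of $\int_0^\infty=\int_0^b+\int_b^\infty$, which is just dominated convergence written out by hand; your direct appeal to DCT on $r_n(s)^2\le 1$ is a legitimate and slightly shorter variant.
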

\begin{proof}
Consider the identity \be\label{iy} y=aT_a^{-1}y+T_a^{-1}A^*f,\quad
Ay=f.\ee Let $w_n:=u_n-y$ and $B_n:=q^{n}T_{q^{n}}^{-1}.$ Then
$w_{n+1}=B_nw_n,\quad w_1=y-u_1=y$. One uses \eqref{iy} and gets
\be\begin{split}\label{res} \|y-u_n\|^2&=\|B_{n-1}B_{n-2}\hdots
B_1w_1\|^2=\|B_{n-1}B_{n-2}\hdots B_1y\|^2\\
&=\int_0^\infty \left(\frac{q^{n-1}}{q^{n-1}+s}
\frac{q^{n-2}}{q^{n-2}+s} \hdots
\frac{q}{q+s} \right)^2d\langle E_sy,y\rangle\\
&=\int_0^\infty \left(\frac{q^{n-1}}{q^{n-1}+s}
\right)^2\left(\frac{q^{n-2}}{q^{n-2}+s} \right)^2\hdots
\left(\frac{q}{q+s} \right)^2d\langle E_sy,y\rangle\\
&\leq \int_0^\infty \frac{q^{2n}}{(q+s)^{2n}}d\langle
E_sy,y\rangle,\end{split}\ee where $E_s$ is the resolution of the
identity corresponding to the operator $T:=A^*A$. Here we have used
the identity \eqref{iy} and the monotonicity of the function
$\phi(x):=\frac{x^2}{(x+s)^2},\ s\geq 0.$ From estimate \eqref{res}
we derive relation \eqref{ciram}. Indeed, write \be \int_0^\infty
\frac{q^{2n}}{(q+s)^{2n}}d\langle E_sy,y\rangle=\int_0^b
\frac{q^{2n}}{(q+s)^{2n}}d\langle
E_sy,y\rangle+\int_b^\infty\frac{q^{2n}}{(q+s)^{2n}}d\langle
E_sy,y\rangle,\ee where $b$ is a sufficiently small number which
will be chosen later. For any fixed $b>0$ one has
$\frac{q}{q+s}\leq\frac{q}{q+b}<1$ if $s\geq b$. Therefore, it
follows that \be \int_b^\infty\frac{q^{2n}}{(q+s)^{2n}}d\langle
E_sy,y\rangle\to 0 \text{ as }n\to \infty. \ee On the other hand one
has \be \int_0^b\frac{q^{2n}}{(q+s)^{2n}}d\langle E_sy,y\rangle\leq
\int_0^b d\langle E_sy,y\rangle . \ee Since $y \perp
\mathcal{N}(A),$ one has $\lim_{b\to 0}\int_0^b d\langle
E_sy,y\rangle=0.$ Therefore, given an arbitrary number $\epsilon>0$
one can choose $b(\epsilon)$ such that \be
\int_0^{b(\epsilon)}\frac{q^{2n}}{(q+s)^{2n}}d\langle
E_sy,y\rangle<\frac{\epsilon}{2}. \ee Using this $b(\epsilon)$, one
chooses sufficiently large $n(\epsilon)$ such that \be
\int_{b(\epsilon)}^\infty\frac{q^{2n}}{(q+s)^{2n}}d\langle
E_sy,y\rangle<\frac{\epsilon}{2},\quad \forall n>n(\epsilon). \ee
Since $\epsilon>0$ is arbitrary, \thmref{thmnfcr} is proved.
\end{proof}
As we mentioned before if the exact data $f$ are contaminated by
some noise then iterative scheme \eqref{iram2} is used, where
$\|f_\dl-f\|\leq \dl$. Note that \be\begin{split}\label{udyr}
\|u_{n+1}^\dl-u_{n+1}\|&\leq
q^{n}\|T_{q^{n}}^{-1}(u_n^\dl-u_n)\|+\frac{\dl}{2\sqrt{q^{n}}}\leq
\|u_n^\dl-u_n\|+\frac{\dl}{2\sqrt{q^{n}}}.
\end{split}\ee To prove the
convergence of the solution obtained by iterative scheme
\eqref{iram2}, we need the following lemmas:

\begin{lem}\label{lemr1} Let $u_n$ and $u_n^\dl$ be defined in \eqref{iram} and \eqref{iram2}, respectively. Then
\be\label{esudu} \|u_n^\dl-u_n\|\leq
\frac{\sqrt{q}}{1-\sqrt{q}}\frac{\dl}{2\sqrt{q^{n}}},\quad n\geq
1.\ee \end{lem}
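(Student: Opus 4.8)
The plan is to establish the bound by induction on $n$, exactly as in the proof of \lemref{lemudu}, using the one-step recursion \eqref{udyr} as the engine. Set $g_n:=\|u_n^\dl-u_n\|$ and, mirroring the notation already in play, introduce the ``comparison sequence'' $v_n:=\frac{\dl}{2\sqrt{q^n}}$, so that \eqref{udyr} reads $g_{n+1}\le g_n+v_n$. The base case is immediate: $u_1=u_1^\dl=0$ by the initial conditions in \eqref{iram} and \eqref{iram2}, hence $g_1=0$, which is certainly $\le \frac{\sqrt q}{1-\sqrt q}v_1$. For the inductive step, assume $g_n\le \frac{\sqrt q}{1-\sqrt q}v_n$; then from \eqref{udyr},
\[
g_{n+1}\le g_n+v_n\le \left(\frac{\sqrt q}{1-\sqrt q}+1\right)v_n=\frac{1}{1-\sqrt q}v_n .
\]
Now the only real point is the ratio $v_n/v_{n+1}=\sqrt{q^{n+1}}/\sqrt{q^{n}}=\sqrt q$, so $v_n=\sqrt q\, v_{n+1}$, giving
\[
g_{n+1}\le \frac{1}{1-\sqrt q}\,\sqrt q\, v_{n+1}=\frac{\sqrt q}{1-\sqrt q}\,v_{n+1},
\]
which closes the induction. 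Substituting $v_{n+1}=\frac{\dl}{2\sqrt{q^{n+1}}}$ back in recovers \eqref{esudu} in the stated form.

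The estimate \eqref{udyr} itself, which I would treat as given since it appears in the excerpt just before the lemma, comes from subtracting the recursions \eqref{iram} and \eqref{iram2}, using linearity to get $u_{n+1}^\dl-u_{n+1}=q^nT_{q^n}^{-1}(u_n^\dl-u_n)+T_{q^n}^{-1}A^*(f_\dl-f)$, then applying the spectral bounds $\|q^nT_{q^n}^{-1}\|\le 1$ (since $q^n/(q^n+s)\le 1$ for $s\ge 0$) and $\|T_{q^n}^{-1}A^*\|=\|A T_{q^n}^{-1}\|\le \frac{1}{2\sqrt{q^n}}$ (the standard bound $\sqrt s/(s+a)\le \frac{1}{2\sqrt a}$ applied with $a=q^n$ to the singular values), together with $\|f_\dl-f\|\le\dl$.

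There is no genuine obstacle here; the lemma is a routine geometric-series induction. The one spot that deserves a moment's care is simply keeping the bookkeeping on the exponent of $q$ straight — the bound ``loses'' a factor $\sqrt q$ at each step while the comparison term $v_n$ ``gains'' exactly that factor, and these two effects must be matched so that the constant $\frac{\sqrt q}{1-\sqrt q}$ is reproduced rather than a weaker constant. Once the telescoping of $v_n=\sqrt q\,v_{n+1}$ is used at the right moment, the induction closes on the nose.
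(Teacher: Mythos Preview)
Your proposal is correct and follows essentially the same approach as the paper: an induction on $n$ with base case $u_1=u_1^\dl=0$, driven by the one-step recursion \eqref{udyr}, with the inductive step closed via the identity $v_n=\sqrt{q}\,v_{n+1}$. The paper's proof is identical in structure and computation, differing only in that it does not introduce the shorthand $g_n,v_n$.
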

\begin{proof} Let us prove relation \eqref{esudu} by induction. For
$n=1$ one has $u_1^\dl-u_1=0$. Thus, for $n=1$ the relation holds.
Suppose \be\label{in1} \|u_{l}^\dl-u_l\|\leq
\frac{\sqrt{q}}{1-\sqrt{q}}\frac{\dl}{2\sqrt{q^{l}}},\quad 1\leq
l\leq k. \ee Then from \eqref{udyr} and \eqref{in1} we have
\bee\begin{split} \|u_{k+1}^\dl-u_{k+1}\|&\leq
\|u_k^\dl-u_k\|+\frac{\dl}{2\sqrt{q^{k}}}\leq
\frac{\sqrt{q}}{1-\sqrt{q}}\frac{\dl}{2\sqrt{q^{k}}}+\frac{\dl}{2\sqrt{q^{k}}}\\
&\leq\sqrt{q}\frac{\dl}{(1-\sqrt{q})2\sqrt{q^{k+1}}}.
\end{split}\eee Thus, $$\|u_{n}^\dl-u_{n}\|\leq \sqrt{q}\frac{\dl}{(1-\sqrt{q})2\sqrt{q^{n}}},\quad n\geq 1 .$$
\lemref{lemr1} is proved.
\end{proof}

Let us formulate our stopping rule: the iteration in iterative
scheme \eqref{iram2} is stopped at the first integer $n_\dl$
satisfying \be\label{srule}
\|AT_{q^{n_\dl}}^{-1}A^*f_\dl-f_\dl\|\leq
C\dl^\varepsilon<\|AT_{q^{n}}^{-1}A^*f_\dl-f_\dl\|,\quad 1\leq
n<n_\dl,\ C>1,\ \varepsilon\in(0,1), \ee and it is assumed that
$\|f_\dl\|>C\dl^\varepsilon.$
\begin{lem}\label{monWn}
Let $u_n^\dl$ be defined in \eqref{iram2}, and
$W_n:=\|AT_{q^n}^{-1}A^*f_\dl-f_\dl\|$. Then \be W_{n+1}\leq
W_n,\quad n\geq 1. \ee
\end{lem}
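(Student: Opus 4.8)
The plan is to reduce the claim to the monotonicity of the scalar function $h(a):=a^2\|Q_a^{-1}f_\dl\|^2$ that was already established in the proof of \lemref{lemqG}. The starting point is the algebraic identity $-aQ_a^{-1}=AT_a^{-1}A^*-I_m$, $a>0$, which was used in \lemref{lemda}. Applying it with $a=q^{n}$ gives
\[
AT_{q^{n}}^{-1}A^*f_\dl-f_\dl=-q^{n}Q_{q^{n}}^{-1}f_\dl,
\]
so that $W_n=\|AT_{q^{n}}^{-1}A^*f_\dl-f_\dl\|=q^{n}\|Q_{q^{n}}^{-1}f_\dl\|$ and hence $W_n^2=h(q^{n})$.

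Next I would recall the spectral representation $W_n^2=h(q^n)=\int_0^\infty \frac{q^{2n}}{(q^{n}+s)^2}\,d\langle F_sf_\dl,f_\dl\rangle$, where $F_s$ is the resolution of the identity for $Q:=AA^*$. Since for fixed $s\ge 0$ the map $a\mapsto \frac{a^2}{(a+s)^2}$ is nondecreasing on $(0,\infty)$ (exactly the pointwise inequality used for $h$ in \lemref{lemqG}), and since $q\in(0,1)$ forces $q^{n+1}<q^{n}$, we get $\frac{q^{2(n+1)}}{(q^{n+1}+s)^2}\le \frac{q^{2n}}{(q^{n}+s)^2}$ for every $s\ge 0$. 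Integrating this pointwise inequality against the nonnegative measure $d\langle F_sf_\dl,f_\dl\rangle$ yields $W_{n+1}^2=h(q^{n+1})\le h(q^{n})=W_n^2$, and therefore $W_{n+1}\le W_n$ for all $n\ge 1$.

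There is essentially no serious obstacle here: the only point requiring care is the bookkeeping in the identity $AT_{q^n}^{-1}A^*f_\dl-f_\dl=-q^nQ_{q^n}^{-1}f_\dl$, after which the result is immediate from the already-proved monotonicity of $h$. One could alternatively avoid invoking $h$ and argue directly from $W_n=q^n\|Q_{q^n}^{-1}f_\dl\|$ together with the spectral calculus, but reusing \lemref{lemqG}'s computation keeps the argument shortest.
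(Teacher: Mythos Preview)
Your proof is correct and follows essentially the same route as the paper: both arguments rewrite $W_n=q^n\|Q_{q^n}^{-1}f_\dl\|$ via the identity $AT_a^{-1}A^*-I_m=-aQ_a^{-1}$, then apply the spectral theorem and the monotonicity of $a\mapsto a^2/(a+s)^2$ to conclude $W_{n+1}^2\le W_n^2$. The only cosmetic difference is that you package the spectral step as a reuse of the function $h$ from \lemref{lemqG}, whereas the paper carries out the same computation inline.
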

\begin{proof} Note that \be\label{Wnn}W_n=\|AA^*Q_{q^n}^{-1}f_\dl-f_\dl\|=\|(Q_{q^n}-q^nI_m)Q_{q^n}^{-1}f_\dl-f_\dl\|=\|q^nQ_{q^n}^{-1}f_\dl\|,\ee
where $Q:=AA^*$, and $Q_a:=Q+aI_m.$ Using the spectral theorem, one
gets \be\label{esWW}
W_{n+1}^2=\int_0^\infty\frac{q^{2(n+1)}}{(q^{n+1}+s)^2}d\langle F_s
f_\dl,f_\dl\rangle\leq \int_0^\infty\frac{q^{2n}}{(q^n+s)^2}d\langle
F_s f_\dl,f_\dl\rangle=W_n^2, \ee where $F_s$ is the resolution of
the identity corresponding to the operator $Q:=AA^*$. Here we have
used the monotonicity of the function
$g(x)=\frac{x^2}{(x+s)^2},\quad s\geq 0.$ Thus, \be W_{n+1}\leq
W_n,\quad n\geq 1. \ee \lemref{monWn} is proved.
\end{proof}

\begin{lem}
Let $u_n^\dl$ be defined in \eqref{iram2}, and
$\|f_\dl\|>C\dl^\varepsilon$, $\varepsilon\in(0,1)$, $C>1$. Then
there exists a unique index $n_\dl$ such that inequality
\eqref{srule} holds.
\end{lem}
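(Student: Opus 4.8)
The plan is to follow the pattern of \lemref{Gnmon}, with $W_n:=\|AT_{q^n}^{-1}A^*f_\dl-f_\dl\|$ playing the role that $G_n$ played there. Monotonicity of $W_n$ is already supplied by \lemref{monWn}, so the one new ingredient needed is that this non-increasing sequence eventually falls strictly below the threshold $C\dl^\varepsilon$. Granting that, one lets $n_\dl$ be the first index with $W_{n_\dl}\le C\dl^\varepsilon$, and existence and uniqueness of an $n_\dl$ satisfying \eqref{srule} follow at once.

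To obtain the limiting bound I would start from the identity $W_n=\|q^nQ_{q^n}^{-1}f_\dl\|$ recorded in \eqref{Wnn} and invoke the spectral theorem for $Q:=AA^*$ to write
\[
W_n^2=\int_0^\infty\frac{q^{2n}}{(q^n+s)^2}\,d\langle F_sf_\dl,f_\dl\rangle,
\]
where $F_s$ is the resolution of the identity of $Q$. Since $q\in(0,1)$, the numbers $q^n$ decrease monotonically to $0$, so the integrand is bounded by $1$, equals $1$ at $s=0$, and tends to $0$ for every fixed $s>0$; by dominated convergence $W_n^2\to\|Pf_\dl\|^2$, where $P$ is the orthogonal projector onto $\mathcal{N}(Q)=\mathcal{N}(A^*)$. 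Because equation \eqref{11} is assumed solvable, $f\in\mathcal{R}(A)$, hence $f\perp\mathcal{N}(A^*)$ and $Pf=0$; therefore $\|Pf_\dl\|=\|P(f_\dl-f)\|\le\|f_\dl-f\|\le\dl$. Thus $\lim_{n\to\infty}W_n\le\dl$, and since $\varepsilon\in(0,1)$ one has $\dl<C\dl^\varepsilon$ for all sufficiently small $\dl>0$, so $\limsup_{n\to\infty}W_n<C\dl^\varepsilon$ in the regime of interest. (In $\R^m$ one may bypass the abstract spectral theorem altogether, expanding $f_\dl$ in an orthonormal eigenbasis of $AA^*$ and letting the contributions of the positive eigenvalues die out.)

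It follows that the set $\{n\ge 1:\ W_n\le C\dl^\varepsilon\}$ is nonempty; let $n_\dl$ be its smallest element. By minimality, $W_n>C\dl^\varepsilon$ for $1\le n<n_\dl$ while $W_{n_\dl}\le C\dl^\varepsilon$, which is precisely \eqref{srule}. For uniqueness, if some $n'\neq n_\dl$ also satisfied \eqref{srule}, then $n'>n_\dl$ would force $W_{n_\dl}>C\dl^\varepsilon$ (take $n=n_\dl$ in the strict inequality of \eqref{srule}), a contradiction, while $n'<n_\dl$ would contradict the minimality of $n_\dl$; alternatively one invokes \lemref{monWn} directly, since once $W_n$ reaches the level $C\dl^\varepsilon$ it stays at or below it thereafter. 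The only genuinely nontrivial point is the limit computation $\lim_nW_n^2=\|Pf_\dl\|^2$ together with the observation $Pf=0$; everything else is bookkeeping. Note that the standing hypothesis $\|f_\dl\|>C\dl^\varepsilon$ is not actually needed for the present statement — it merely rules out $f_\dl=0$ and is used later in the convergence analysis.
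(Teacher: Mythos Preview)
Your proof is correct and reaches the same conclusion $\limsup_{n}W_n\le\dl$ as the paper, but by a different route. You invoke the spectral theorem and dominated convergence to evaluate $\lim_nW_n^2=\|Pf_\dl\|^2$, then use $f=Ay\in\mathcal{R}(A)\perp\mathcal{N}(A^*)$ to bound $\|Pf_\dl\|\le\dl$. The paper instead splits $f_\dl=(f_\dl-f)+Ay$ and uses the elementary operator bounds $\|q^nQ_{q^n}^{-1}\|\le1$ and $\|Q_a^{-1}A\|\le\frac{1}{2\sqrt a}$ to obtain the explicit quantitative estimate
\[
W_n\le\dl+\tfrac{\sqrt{q^n}}{2}\|y\|,
\]
from which the same $\limsup$ bound follows. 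Your argument is cleaner conceptually (and your identification of the limit as the kernel projection is sharper than merely bounding by $\dl$), but the paper's concrete inequality \eqref{en} is not dead weight: it is reused verbatim in the proof of \lemref{lemr2} to control $\dl/\sqrt{q^{n_\dl}}$, so the quantitative form pays for itself immediately. Your observation that the hypothesis $\|f_\dl\|>C\dl^\varepsilon$ is not actually used here is accurate; the paper's proof does not use it either, and both arguments rely only on $\dl<C\dl^\varepsilon$, which holds for small $\dl$ since $\varepsilon<1$. Uniqueness is handled identically in both: it is immediate from the definition of $n_\dl$ as a first hitting time.
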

\begin{proof}
Let $e_n:=AT_{q^n}^{-1}A^*f_\dl-f_\dl$. Then \be
e_{n}=q^{n}Q_{q^{n}}^{-1}f_\dl, \ee where $Q_a:=AA^*+aI.$ Therefore,
\be\begin{split}\label{en} \|e_{n}\|&\leq
\|q^{n}Q_{q^{n}}^{-1}(f_\dl-f)\|+\|q^{n}Q_{q^{n}}^{-1}f\|\\
&\leq \|f_\dl-f\|+\|q^{n}Q_{q^{n}}^{-1}Ay\|\leq
\dl+\frac{\sqrt{q^{n}}}{2}\|y\|,
\end{split}\ee where the estimate $\|Q_a^{-1}A\|=\|AT_a^{-1}\|\leq \frac{1}{2\sqrt{a}}$ was used. Thus, $$\limsup_{n\to \infty}\|e_n\|\leq
\dl.$$ This shows that the integer $n_\dl$, satisfying
\eqref{srule}, exists. The uniqueness of $n_\dl$ follows from its
definition.
\end{proof}

\begin{lem}\label{lemr2}
Let $u_n^\dl$ be defined in \eqref{iram2}. If $n_\dl$ is chosen by
rule \eqref{srule} then \be \lim_{\dl\to
0}\frac{\dl}{\sqrt{q^{n_\dl}}}=0. \ee
\end{lem}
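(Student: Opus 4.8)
The plan is to mimic the argument of \lemref{lemand}: extract from stopping rule \eqref{srule} a quantitative lower bound on $q^{n_\dl}$ in terms of $\dl$, and combine it with the lower bound $W_{n_\dl-1}>C\dl^\varepsilon$ coming from \eqref{srule}. First I would recall from \eqref{Wnn} that $W_n=\|q^nQ_{q^n}^{-1}f_\dl\|$, and from the estimate \eqref{en} in the preceding lemma that
\[
W_{n_\dl-1}=\|e_{n_\dl-1}\|\leq \dl+\frac{\sqrt{q^{n_\dl-1}}}{2}\|y\|.
\]
Since $n_\dl$ is the \emph{first} integer satisfying \eqref{srule}, the index $n_\dl-1$ violates it (when $n_\dl\geq 2$), so $W_{n_\dl-1}>C\dl^\varepsilon$. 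Chaining these two inequalities gives
\[
C\dl^\varepsilon<\dl+\frac{\sqrt{q^{n_\dl-1}}}{2}\|y\|,
\]
hence, for $\dl$ small enough that $\dl<C\dl^\varepsilon$ (possible since $\varepsilon\in(0,1)$), one gets $(C-1)\dl^\varepsilon<\frac{\sqrt{q^{n_\dl-1}}}{2}\|y\|$, i.e.
\[
\frac{1}{\sqrt{q^{n_\dl-1}}}\leq \frac{\|y\|}{2(C-1)\dl^\varepsilon}.
\]

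Next I would divide through to isolate the quantity of interest. Writing $q^{n_\dl}=q\cdot q^{n_\dl-1}$, we obtain
\[
\frac{\dl}{\sqrt{q^{n_\dl}}}=\frac{\dl}{\sqrt{q}\,\sqrt{q^{n_\dl-1}}}\leq \frac{\dl^{1-\varepsilon}\|y\|}{2\sqrt{q}\,(C-1)}.
\]
Since $\varepsilon\in(0,1)$, the right-hand side tends to $0$ as $\dl\to 0$, which is exactly the claimed relation. I should also dispose of the degenerate case $n_\dl=1$: if $n_\dl=1$ for a sequence of $\dl\to 0$, then $q^{n_\dl}=q$ is bounded away from $0$ and $\dl/\sqrt{q^{n_\dl}}=\dl/\sqrt{q}\to 0$ trivially; so without loss of generality one may assume $n_\dl\geq 2$ along the relevant subsequence, or simply note that $\limsup_{n\to\infty}\|e_n\|\leq\dl$ together with $\|f_\dl\|>C\dl^\varepsilon$ forces $n_\dl\to\infty$ by an argument parallel to \lemref{lemand}, making the $n_\dl=1$ case irrelevant for small $\dl$.

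This lemma is essentially a routine consequence of the lower bound built into the discrepancy principle, so I do not anticipate a genuine obstacle; the only point requiring mild care is the bookkeeping that $n_\dl-1$ fails \eqref{srule} (the strict inequality $W_{n_\dl-1}>C\dl^\varepsilon$) and that one may legitimately restrict to $n_\dl\geq 2$. The structure is identical in spirit to \lemref{lemdan} in Section 2, with $\sqrt{\alpha_0q^{n_\dl}}$ replaced by $\sqrt{q^{n_\dl}}$ and the role of $G_{n_\dl-1}$ played by $W_{n_\dl-1}$.
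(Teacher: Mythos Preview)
Your proof is correct and follows essentially the same route as the paper: both use the estimate $\|AT_{q^{n_\dl-1}}^{-1}A^*f_\dl-f_\dl\|\leq \dl+\tfrac{1}{2}\sqrt{q^{n_\dl-1}}\|y\|$ from \eqref{en} together with the strict inequality in \eqref{srule} at index $n_\dl-1$ to conclude $\dl/\sqrt{q^{n_\dl}}\leq \tfrac{\|y\|}{2\sqrt{q}(C-1)}\dl^{1-\varepsilon}\to 0$ (the paper writes $\|e_1\|$ in place of $\|y\|$, but $e_1=-y$). You are slightly more careful than the paper in noting explicitly that $\dl\leq\dl^\varepsilon$ is needed to pass from $C\dl^\varepsilon$ to $(C-1)\dl^\varepsilon$ and in disposing of the degenerate case $n_\dl=1$.
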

\begin{proof}
From \eqref{en} we have \be\begin{split}\label{esr2}
\|AT_{q^{n-1}}^{-1}A^*f_\dl-f_\dl\|&\leq
\dl+\frac{\sqrt{q^{n-1}}}{2}\|e_1\| ,\end{split}\ee where
$e_1:=u_1-y=-y$. It follows from stopping rule \eqref{srule} and
estimate \eqref{esr2} that \be\begin{split} C\dl^\varepsilon&\leq
\|AT_{q^{n_\dl-1}}^{-1}A^*f_\dl-f_\dl\|\leq
\frac{\sqrt{q^{n_\dl-1}}}{2}\|e_1\|+\dl.
\end{split}\ee Therefore,
\be \left(C-1\right)\dl^\varepsilon\leq
\frac{\sqrt{q^{n_\dl-1}}}{2}\|e_1\|, \ee and so \be
\frac{1}{\sqrt{q^{n_\dl-1}}}\leq
\frac{\|e_1\|}{2(C-1)\dl^{\varepsilon}},\quad \varepsilon\in(0,1).
\ee This implies \be
\frac{\dl}{\sqrt{q^{n_\dl}}}=\frac{\dl}{\sqrt{qq^{n_\dl-1}}}\leq
\frac{\|e_1\|\dl}{2q^{1/2}(C-1)\dl^{\varepsilon}}=\frac{\|e_1\|}{2q^{1/2}(C-1)}\dl^{1-\varepsilon}.\ee
Thus, $\frac{\dl}{\sqrt{q^{n_\dl}}}\to 0$ as $\dl\to 0.$
\lemref{lemr2} is proved.
\end{proof}
The proof of convergence of the solution obtained by iterative
scheme \eqref{iram2} is given in the following theorem:
\begin{thm}\label{Mthm2}
Let $u_n^\dl$ be defined in \eqref{iram2}, $y\perp \mathcal{N}(A)$,
$\|f_\dl\|>C\dl^\varepsilon$, $\varepsilon\in(0,1)$, $C>1,\quad
q\in(0,1)$. If $n_\dl$ is chosen by rule \eqref{srule}, then \be
\|u_n^\dl-y\|\to 0 \text{ as } \dl\to 0. \ee
\end{thm}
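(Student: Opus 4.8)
The plan is to estimate $\|u_{n_\dl}^\dl-y\|$ by comparing the noisy iterate with the exact-data iterate $u_n$ from \eqref{iram} and applying the triangle inequality:
\[
\|u_{n_\dl}^\dl-y\|\leq \|u_{n_\dl}^\dl-u_{n_\dl}\|+\|u_{n_\dl}-y\|=:J_1+J_2 ,
\]
and then to show separately that $J_1\to 0$ and $J_2\to 0$ as $\dl\to 0$.

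The term $J_1$ is handled directly by the lemmas already proved. By \lemref{lemr1} one has $J_1\leq \frac{\sqrt{q}}{1-\sqrt{q}}\frac{\dl}{2\sqrt{q^{n_\dl}}}$, and by \lemref{lemr2} the quantity $\frac{\dl}{\sqrt{q^{n_\dl}}}$ tends to $0$ as $\dl\to 0$; hence $J_1\to 0$. This step is routine.

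For $J_2$, note first that \thmref{thmnfcr} gives $\|u_n-y\|\to 0$ as $n\to\infty$ (this is where the hypothesis $y\perp\mathcal{N}(A)$ enters), so it suffices to prove that $n_\dl\to\infty$ as $\dl\to 0$. I would argue by contradiction. The assumption $\|f_\dl\|>C\dl^\varepsilon>\dl\geq\|f_\dl-f\|$ forces $\|f\|\geq\|f_\dl\|-\dl>0$, so $f\neq 0$ for all small $\dl$, and therefore $q^{N}Q_{q^{N}}^{-1}f\neq 0$ for every fixed integer $N$ since $Q_{q^{N}}^{-1}$ is invertible. Suppose $n_\dl\not\to\infty$; then there is a sequence $\dl_j\to 0$ and an integer $N$ with $n_{\dl_j}\leq N$ for all $j$. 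Using the representation $W_n=\|q^{n}Q_{q^{n}}^{-1}f_\dl\|$ from \eqref{Wnn}, the monotonicity $W_{n+1}\leq W_n$ from \lemref{monWn}, and the stopping rule \eqref{srule}, we get
\[
\|q^{N}Q_{q^{N}}^{-1}f_{\dl_j}\|=W_N\leq W_{n_{\dl_j}}=\|AT_{q^{n_{\dl_j}}}^{-1}A^*f_{\dl_j}-f_{\dl_j}\|\leq C\dl_j^\varepsilon .
\]
Since $f_{\dl_j}\to f$ and $Q_{q^{N}}^{-1}$ is bounded, the left-hand side converges to $\|q^{N}Q_{q^{N}}^{-1}f\|>0$, while the right-hand side tends to $0$ — a contradiction. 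Hence $n_\dl\to\infty$, and consequently $J_2=\|u_{n_\dl}-y\|\to 0$ as $\dl\to 0$ by \thmref{thmnfcr}.

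The hard part is the proof that $n_\dl\to\infty$: it is the only place where the stopping rule \eqref{srule}, the monotonicity of the discrepancy $W_n$, and the nondegeneracy $f\neq 0$ must all be combined. Once this is in hand, feeding $J_1\to 0$ and $J_2\to 0$ into the triangle inequality yields $\lim_{\dl\to 0}\|u_{n_\dl}^\dl-y\|=0$, which is the assertion of the theorem.
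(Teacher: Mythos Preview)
Your proof is correct and follows the same overall architecture as the paper: split $\|u_{n_\dl}^\dl-y\|\leq I_1+I_2$ with $I_1=\|u_{n_\dl}^\dl-u_{n_\dl}\|$ and $I_2=\|u_{n_\dl}-y\|$, kill $I_1$ via \lemref{lemr1} and \lemref{lemr2}, and kill $I_2$ by combining \thmref{thmnfcr} with $n_\dl\to\infty$.

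The one genuine difference is how you establish $n_\dl\to\infty$. The paper isolates this as a separate lemma (\lemref{lemnr}) and proves the stronger statement $q^{n_\dl}\to 0$ by recycling the spectral argument from \lemref{lemand}: it bounds $h(\dl,\alpha)=\alpha^2\|Q_\alpha^{-1}f_\dl\|^2$ from below for $\alpha$ bounded away from zero, using the resolution of the identity and a choice of $\lambda_0$ with $F_{\lambda_0}f\neq 0$. Your route is more elementary and arguably cleaner in this finite-dimensional setting: you exploit that $n_\dl$ is integer-valued, pick a single integer $N$ bounding the alleged subsequence, and then use the already-proved monotonicity $W_N\leq W_{n_{\dl_j}}$ from \lemref{monWn} together with continuity of the fixed bounded operator $Q_{q^N}^{-1}$ to reach a contradiction with $f\neq 0$. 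This avoids the spectral lower bound entirely. The paper's version, on the other hand, gives the quantitative conclusion $q^{n_\dl}\to 0$ (not just $n_\dl\to\infty$), and its argument transfers verbatim to the infinite-dimensional setting with a continuous regularization parameter, which is why that machinery was set up earlier in \lemref{lemand}.
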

\begin{proof}
From \lemref{lemr1} we get the following estimate: \be
\|u_{n_\dl}^\dl-y\|\leq
\|u_{n_\dl}^\dl-u_{n_\dl}\|+\|u_{n_\dl}-y\|\leq
\frac{\sqrt{q}}{1-\sqrt{q}}\frac{\dl}{2\sqrt{q^{n_\dl}}}+\|u_{n_\dl}-y\|:=I_1+I_2,
\ee where
$I_1:=\frac{\sqrt{q}}{1-\sqrt{q}}\frac{\dl}{2\sqrt{q^{n_\dl}}}$ and
$I_2:=\|u_{n_\dl}-y\|.$ By \lemref{lemr2} one gets $I_1\to 0$ as
$\dl\to 0.$ To prove $\lim_{\dl\to 0}I_2=0$ one needs the relation
$\lim_{\dl\to 0} n_\dl=\infty$. This relation is a consequence of
the following lemma:
\begin{lem}\label{lemnr}
If $n_\dl$ is chosen by rule \eqref{srule}, then \be q^{n_\dl}\to 0
\text{ as } \dl\to 0, \ee so \be \lim_{\dl\to 0} n_\dl=\infty.\ee
\end{lem}
\begin{proof}
Note that \bee\begin{split}
AT_{a}^{-1}A^*f_\dl-f_\dl&=AA^*Q_{a}^{-1}f_\dl-f_\dl=(AA^*+aI_m-aI_m)Q_a^{-1}f_\dl-f_\dl\\
&=f_\dl-aQ_a^{-1}f_\dl-f_\dl=-aQ_a^{-1}f_\dl,\end{split}\eee where
$a>0$, $Q:=AA^*$ and $Q_a:=Q+aI.$ From stopping rule \eqref{srule}
we have $0\leq \|AT_{q^{n_\dl}}^{-1}A^*f_\dl-f_\dl\|\leq
C\dl^\varepsilon$.  Thus, \be \lim_{\dl\to
0}\|AT_{q^{n_\dl}}^{-1}A^*f_\dl-f_\dl\|=\lim_{\dl\to 0}
\|q^{n_\dl}Q_{q^{n_\dl}}^{-1}f_\dl\|=0.\ee Using an argument given
in the proof of \lemref{lemand}, (see formulas
\eqref{uze}-\eqref{hdj} in which $\alpha_0=1$), one gets
$\lim_{\dl\to 0}q^{n_\dl}=0$, so $\lim_{\dl\to 0}n_\dl=\infty.$
\lemref{lemnr} is proved.
\end{proof}
\lemref{lemnr} and \thmref{thmnfcr} imply $I_2\to 0$ as $\dl\to 0.$
Thus, \thmref{Mthm2} is proved.
\end{proof}

\section{Numerical experiments}
In all the experiments we measure the accuracy of the approximate
solutions using the relative error: \bee \text{Rel.Err
}=\frac{\|u_{n_\dl}^\dl-y\|}{\|y\|}, \eee where $\|.\|$ is the
Euclidean norm in $\R^n$. The exact data are perturbed by some
noises so that \bee \|f_\dl-f\|\leq\dl , \eee where \bee
f_\dl=f+\dl\frac{e}{\|e\|}, \eee $\dl$ is the noise level, and $e\in
\R^n$ is the noise taken from the Gaussian distribution with mean
$0$ and standard deviation 1. The MATLAB routine called "randn" with
seed 15 is used to generate the vector $e$. The iterative schemes
\eqref{itn1} and \eqref{iram2} will be denoted by $IS_1$ and $IS_2$,
respectively. In the iterative scheme $IS_1$, for fixed $q\in(0,1)$,
one needs to choose a sufficiently large $\alpha_0>0$ so that
inequality \eqref{Asfd} hold, for example one may choose
$\alpha_0\geq 1.$ The number of iterations of $IS_1$ and $IS_2$ are
denoted by $Iter_1$ and $Iter_2$, respectively. We compare the
results obtained by the proposed methods with the results
obtained by using the
variational regularization method (VR). In VR we use the Newton
method for solving the equation for regularization parameter.
In
\cite{MRZ84} the nonlinear equation
\be\label{DPVR} \|Au_{VR}(a)-f_\dl\|^2=(C\dl)^2,\ C=1.01,
\ee
where $u_{VR}(a):=T_{a}^{-1}A^*f_\dl,$
is solved by the Newton's method. In this paper the initial
value of the regularization parameter is taken to be
$\alpha_0=\frac{\alpha_0}{2^{k_\dl}}$,
where $k_\dl$ is the first integer such that the Newton's method for
solving \eqref{DPVR} converges. We stop the iteration of the
Newton's method at the first integer $n_\dl$ satisfying the
inequality $|\|AT_{a_n}^{-1}A^*f_\dl-f_\dl\|^2-(C\dl)^2|\leq 10^{-3}
(C\dl)^2,\quad a_0:=\alpha_0.$ The number of iterations needed
to complete a convergent Newton's method is denoted by $Iter_{VR}$.
\subsection{Ill-conditioned linear algebraic systems}
\begin{table}[h]\caption{Condition number of some Hilbert matrices}\label{conmat}
\newcommand{\m}{\hphantom{$-$}}
\renewcommand{\tabcolsep}{2pc} 
\renewcommand{\arraystretch}{1.2} 
\begin{tabular}[htb]{ll}
\hline
$n$&$\kappa(A)=\|A\|\|A^{-1}\|$\\
\hline
$10$& $1.915\times 10^{13}$ \\
$20$& $1.483\times 10^{28}$\\
$70$&$8.808\times 10^{105}$\\
$100$& $1.262\times 10^{150}$\\
$200$&$1.446\times 10^{303}$ \\
\hline
\end{tabular}\\[2pt]
\end{table}

Consider the following system: \be\label{h1} H^{(m)}u=f, \ee where
\bee H^{(m)}_{ij}=\frac{1}{i+j+1},\quad i,j=1,2,...,m,\eee is a
Hilbert matrix of dimension $m$. The system \eqref{h1} is an example
of a severely ill-posed problem if $m>10$, because the condition
number of the Hilbert matrix is increasing exponentially as $m$
grows, see Table \eqref{conmat}. The minimal eigenvalues of Hilbert
matrix of dimension $m$ can be obtained using the following formula
\be\label{minlam}
\lambda_{\min}(H^{(m)})=2^{15/4}\pi^{3/2}\sqrt{m}(\sqrt{2}+1)^{-(4m+4)}(1+o(1)).
\ee
 This formula is proved in
\cite{GAK01}. Since $\kappa
(H^{(m)})=\frac{\lambda_{max}(H^{(m)})}{\lambda_{min}(H^{(m)})}$, it
follows from \eqref{minlam} that the condition number grows as
$O(\frac{e^{3.5255m}}{\sqrt{m}})$.
The following exact solution is used to test the proposed methods:\\
$$y\in \R^m, \text{ where } y_k=\sqrt{.5 k},\
k=1,2,\hdots,m.$$ The Hilbert matrix of dimension $m=200$ is used in
the experiments. This matrix has condition number of order
$10^{303}$, so it is a severely ill-conditioned matrix.
\begin{table}[ht]\caption{Hilbert matrix problem: the number of iterations and the relative errors with respect to the parameter $q$ ($\alpha_0=1,\quad \dl=10^{-2}$).  }
\newcommand{\m}{\hphantom{$-$}}
\renewcommand{\tabcolsep}{.85pc} 
\renewcommand{\arraystretch}{1.2} 
\begin{tabular}{lllll}
\hline
&\multicolumn{2}{c}{}\\
\hline
$q$&\multicolumn{2}{c}{$IS_1$}&\multicolumn{2}{c}{$IS_2$}\\
\hline
 & REl.Err&$Iter_1$& REl.Err&$Iter_2$.\\
\hline

$.5$&$  0.031$&$  24$&$  0.032 $&$ 23 $\\

$.25$&$0.031$&$  13$&$  0.032$&$  13$\\
$.125$&$  0.032$&$  9$&$  0.032$&$9$\\
\hline
\end{tabular}
\end{table}
In Table 2 one can see that the number of iterations of the
iterative scheme $IS_1$ and $IS_2$ increases as the value of $q$
increases. The relative errors start to increase at $q=.125$. By
these observations, we suggest to choose the parameter $q$ in the
interval $(.125,.5).$ In Table 3 the results of the experiments with
various values of $\dl$ are presented.
\begin{table}[ht]\caption{ICLAS with Hilbert matrix: the relative errors and the number of iterations   }
\newcommand{\m}{\hphantom{$-$}}
\renewcommand{\tabcolsep}{.85pc} 
\renewcommand{\arraystretch}{1.2} 
\begin{tabular}{lllllll}
\hline
&\multicolumn{3}{c}{}\\
\hline
$\delta$&\multicolumn{2}{c}{$IS_1$}&\multicolumn{2}{c}{$IS_2$}&\multicolumn{2}{c}{VR}\\
\hline
 & REl.Err&$Iter_1$& REl.Err&$Iter_2$& REl.Err&$Iter_{VR}$.\\
\hline
$5\%$&$0.038$&$  11 $&$ 0.043 $&$11$&$  0.055 $&$13
$\\
$3\%$&$0.037$&$  12 $&$ 0.034 $&$ 12 $&$ 0.045 $&$14  $\\
$1\%$&$ 0.031$&$  13 $&$ 0.032 $&$ 13$&$  0.034$&$15 $ \\
\hline
\end{tabular}
\end{table}
Here the parameter $\varepsilon$ was .99. The geometric sequence
$\{.25^{n-1}\}_{n=1}^\infty$ was used in the iterative schemes
$IS_1$ and $IS_2$. The parameter $C$ in \eqref{rule} and
\eqref{srule} were $1.01$. The parameter $k_\dl$ in the variational
regularization method was 1. One can see that the relative errors of
$IS_1$ and $IS_2$ are smaller than these for  the VR. The relative
error decreases as the noise level decreases which can be seen on
the same table. This shows that the proposed method produces stable
solutions.

\subsection{Fredholm integral equations of the first kind (FIEFK)}
 Here we consider two Fredholm integral equations :
\begin{itemize}
\item[a)]
\be\label{Phi} f(s)=\int_{-3}^3k(t-s)u(t)dt, \ee where \be
k(z)=\left\{
                                             \begin{array}{ll}
                                               1 + \cos(\frac{\pi}{3}z) , &\hbox{$|z| <  3$;} \\
                                               0, & \hbox{$|z|\geq 3$,}
                                             \end{array}
                                           \right.
 \ee
and \be f(z)=\left\{
               \begin{array}{ll}
                 (6+z)\left[1-\frac{1}{2}\cos(\frac{\pi}{3}z)\right]-\frac{9}{2\pi}\sin(\frac{\pi z}{3}), & \hbox{$|z|\leq 6$;} \\
                 0, & \hbox{$|z|>6$.}
               \end{array}
             \right.
 \ee
\item[b)]\be\label{Derv} f(s)=\int_{0}^1k(s,t)u(t)dt, s\in(0,1),\ee where \be
k(s,t)=\left\{
                                             \begin{array}{ll}
                                               s(t-1) , &\hbox{$s<t$;} \\
                                               t(s-1), & \hbox{$s\geq t$,}
                                             \end{array}
                                           \right. \ee and \be
f(s)=(s^3 - s)/6 .\ee

\end{itemize}
The problem $a)$ is discussed in \cite{PHI62} where the solution to
this problem is $u(x)=k(x)$. The second problem is taken from
\cite{LMJL85} where the solution is $u(x)=x$. The Galerkin's method
is used to discretized the integrals \eqref{Phi} and \eqref{Derv}.
For the basis functions we use the following orthonormal box
functions \be \phi_i(s):=\left\{
                          \begin{array}{ll}
                            \sqrt{\frac{m}{c_1}}, & \hbox{$[s_{i-1},s_i]$ ;} \\
                            0, & \hbox{otherwise,}
                          \end{array}
                        \right.\ee and
\be \psi_i(t):=\left\{
                          \begin{array}{ll}
                            \sqrt{\frac{m}{c_2}}, & \hbox{$[t_{i-1},t_i]$ ;} \\
                            0, & \hbox{otherwise,}
                          \end{array}
                        \right.\ee where $s_i=d_1+i\frac{d_2}{m}$, $t_i=d_3+i\frac{d_4}{m},\ i=0,1,2,\hdots,m.$ In the problem $a)$ the
                        parameters $c_1,$ $c_2$, $d_1,\ d_2,\ d_3$ and $d_4$ are set to $12$, $6$, $-6,\ 12,\ -3$ and $6$,
respectively. In the second problem we use $d_1=d_3=0$ and
$c_1=c_2=d_2=d_4=1$. Here we approximate the solution $u(t)$ by $
\tilde{u}=\sum_{j=1}^mc_j\psi_j(t).$  Therefore solving problem
\eqref{Phi} is reduced to solving the linear algebraic system \be
A\tilde{c}=f,\ \tilde{c},f\in \R^m, \ee where in problem $a)$
$$A_{ij}=\int_{-3}^{3}\int_{-6}^6k(t-s)\phi_i(s)\psi_j(t)dsdt$$ and
$f_i=\int_{-6}^6f(s)\phi_i(s)ds,\ i,j=1,2,\hdots,m$, and in problem
$b)$
$$A_{ij}=\int_{0}^{1}\int_{0}^1k(s,t)\phi_i(s)\psi_j(t)dsdt$$ and
$f_i=\int_{0}^1f(s)\phi_i(s)ds,$ and $\tilde{c}_j=c_j\
i,j=1,2,\hdots,m$.

\begin{table}[ht]\caption{Problem $a)$: the
number of iterations and the relative errors with respect to the
parameter $q$ ($\alpha_0=2,\quad \dl=10^{-2}$).
}\newcommand{\m}{\hphantom{$-$}}
\renewcommand{\tabcolsep}{.85pc} 
\renewcommand{\arraystretch}{1.2} 
\begin{tabular}{lllll}
\hline
&\multicolumn{2}{c}{}\\
\hline
$q$&\multicolumn{2}{c}{$IS_1$}&\multicolumn{2}{c}{$IS_2$}\\
\hline
 & REl.Err&$Iter_1$& REl.Err&$Iter_2$.\\
\hline

$.5$&$  0.008$&$  12$&$  0.007 $&$ 11 $\\

$.25$&$0.009 $&$ 7 $&$ 0.007$&$  7$\\
$.125$&$  0.009$&$5$&$  0.008$&$ 5
$\\
\hline
\end{tabular}
\end{table}

\begin{table}[htp]\caption{Problem $a)$: the relative errors and the number of iterations   }
\newcommand{\m}{\hphantom{$-$}}
\renewcommand{\tabcolsep}{.85pc} 
\renewcommand{\arraystretch}{1.2} 
\begin{tabular}{lllllll}
\hline
&\multicolumn{3}{c}{}\\
\hline
$\delta$&\multicolumn{2}{c}{$IS_1$}&\multicolumn{2}{c}{$IS_2$}&\multicolumn{2}{c}{VR}\\
\hline
 & REl.Err&$Iter_1$& REl.Err&$Iter_2$& REl.Err&$Iter_{VR}.$\\
\hline
$   5\%    $&$0.018$&$ 6 $&$ 0.014$&$  6 $&$ 0.016$&$ 11
$ \\
$   3\%   $&$ 0.013$&$  6$&$  0.011$&$  6$&$  0.013 $&$12
$\\
$   1\%  $&$0.009 $&$ 7$&$  0.007 $&$ 7 $&$ 0.008$&$ 15 $\\
\hline
\end{tabular}
\end{table}

\begin{table}[ht]\caption{Problem $b)$: the number of iterations and the relative errors with respect to the parameter $q$ ($\alpha_0=4,\quad\dl=10^{-2}$).  }
\newcommand{\m}{\hphantom{$-$}}
\renewcommand{\tabcolsep}{.85pc} 
\renewcommand{\arraystretch}{1.2} 
\begin{tabular}{lllll}
\hline
&\multicolumn{2}{c}{}\\
\hline
$q$&\multicolumn{2}{c}{$IS_1$}&\multicolumn{2}{c}{$IS_2$}\\
\hline
 & REl.Err&$Iter_1$& REl.Err&$Iter_2$.\\
\hline

$.5$&$  0.428 $&$ 17 $&$ 0.446$&$  15 $\\

$.25$&$0.421 $&$ 9 $&$ 0.436 $&$ 9$\\
$.125$&$  0.439 $&$ 6$&$  0.416 $&$7
$\\
\hline
\end{tabular}
\end{table}

\begin{table}[htp]\caption{Problem $b)$: the relative errors and the number of iterations   }
\newcommand{\m}{\hphantom{$-$}}
\renewcommand{\tabcolsep}{.85pc} 
\renewcommand{\arraystretch}{1.2} 
\begin{tabular}{lllllll}
\hline
&\multicolumn{3}{c}{}\\
\hline
$\delta$&\multicolumn{2}{c}{$IS_1$}&\multicolumn{2}{c}{$IS_2$}&\multicolumn{2}{c}{VR}\\
\hline
 & REl.Err&$Iter_1$& REl.Err&$Iter_2$& REl.Err&$Iter_{VR}.$\\
\hline
$   5\%    $&$
  0.618 $&$ 7 $&$ 0.621 $&$ 7 $&$ 0.627$&$12$ \\
$   3\%   $&$ 0.541 $&$ 8 $&$ 0.559 $&$ 8$&$
0.584$&$13 $\\
$   1\%  $&$0.421 $&$ 9$&$  0.436 $&$ 9$&$
0.457 $&$13 $\\
\hline
\end{tabular}
\end{table}

The parameter $m=600$ is used in problem $a)$. In this case the
condition number of the matrix $A$ with $m=600$ is $3.427 \times
10^9$, so it is an ill-conditioned matrix. Here the parameter $C$ in
$IS_1$ and $IS_2$ are $2$ and $1.01$, respectively. For problem $b)$
the parameter $m$ is 200. In this case the condition number of the
matrix $A$ is $4.863\times 10^4$. The parameter $C$ is $1.01$ in the
both iterative schemes $IS_1$ and $IS_2$. In Tables 4 and 6 we give
the relation between the parameter $q$ and the number of iterations
and the relative errors of the iterative schemes $IS_1$ and $IS_2$.
The closer the parameter $q$ to 1, the larger number of iterations
we get, and the closer the parameter $q$ to 0, the smaller the
number of iterations we get. But the relative error starts to
increase if the parameter $q$ is chosen too small. Based on the
numerical results given in Tables 4 and 5, we suggest to choose the
parameter $q$ in the interval $(0.125,0.5)$. In the iterative
schemes $IS_1$ and $IS_2$ we use the geometric sequence $\{2\times
.25^{n-1}\}_{n=1}^\infty$ for problem a). The geometric series
$\{4\times .25^{n-1}\}_{n=1}^\infty$ is used in problem b). In the
variational regularization method we use $\alpha_0=2$ and
$\alpha_0=4$ as the initial regularization parameter of the Newton's
method in problem $a)$ and $b)$, respectively. Since the Newton's
method for solving \eqref{DPVR} is locally convergent, in problem
$b)$ we need to choose a smaller regularization parameter $\alpha_0$
than for $IS_1$ and $IS_2$ methods. Here $k_\dl=8$ was used. The
numerical results on Table 5 show that the solutions produced by the
proposed iterative schemes are stable. In problem $a)$ the relative
errors of the iterative scheme $IS_2$, are smaller than these for
the iterative scheme $IS_1$ and than these for the variational
regularization, VR. In Table 7 the relative errors produced by the
three methods for solving problem $b)$ are presented. The relative
error of $IS_1$ is smaller than the one for the other two methods.

\section{Conclusion} We have demonstrated that the proposed iterative
schemes can be used for solving ill-conditioned linear algebraic systems
stably. The advantage of the iterative scheme \eqref{itn1} compared with
iterative scheme \eqref{iram2} is the following: one applies the operator
$T_a^{-1}$ only once at each iteration. Note that the difficulty of using
the Newton's method is in choosing the initial value for the
regularization parameter, since the Newton's method for solving equation
\eqref{DPVR} converges only locally. In solving \eqref{DPVR} by the
Newton's method one often has to choose an initial regularization
parameter $a_0$ sufficiently close to the root of equation \eqref{DPVR} as
shown in problem $b)$ in Section 4.2. In our iterative schemes the initial
regularization parameter can be chosen in the interval $[1,4]$ which is
larger than the initial regularization parameter used in the variational
regularization method. In the iterative scheme $IS_1$ we modified the
discrepancy-type principle
$$\int_0^te^{-(t-s)}a(s)\|Q_{a(s)}^{-1}\|ds=C\dl,\ C\in(1,2),$$ given in
\cite{RAMM525}, by using \eqref{ae} to get discrepancy-type principle
\eqref{rule1}, which can be easily implemented numerically. In Section 3
we used the geometric series $\{\alpha_0 q^n\}_{n=1}^\infty$ in place of
the constant regularization parameter $a$ in the iterative scheme
$$u_{n+1}=aT_a^{-1}u_n+T_a^{-1}A^*f_\dl$$ developed in \cite{RAMM504}.
This geometric series of the regularization parameter allows one to use
the a posteriori stopping rule given in \eqref{srule}. We proved that this
stopping rule produces stable approximation of the minimal norm solution
of equation \eqref{11}. In all the experiments stopping rules
\eqref{rule1} and \eqref{srule} produce stable approximations to the
minimal norm solution of equation \eqref{11}.
It is of interest to develop a
method for choosing the parameter $q$ in the proposed methods which gives
sufficiently small relative error and small number of iterations.\\

\end{document}